\theoremstyle{plain}
\newtheorem{theorem}{Theorem}[section]
\newtheorem{lemma}[theorem]{Lemma}
\newtheorem{corollary}[theorem]{Corollary}
\newtheorem{proposition}[theorem]{Proposition}
\theoremstyle{definition}
\newtheorem{question}[theorem]{Question}
\theoremstyle{remark}
\newtheorem{remark}[theorem]{Remark}
\newcommand{\bQ}{\mathbb{Q}}
\newcommand{\Q}{\bQ}
\newcommand{\cA}{\mathcal{A}}
\newcommand{\cB}{\mathcal{B}}
\newcommand{\cC}{\mathcal{C}}
\newcommand{\cD}{\mathcal{D}}
\newcommand{\cE}{\mathcal{E}}
\newcommand{\cF}{\mathcal{F}}
\newcommand{\cI}{\mathcal{I}}
\newcommand{\I}{\cI}
\newcommand{\cJ}{\mathcal{J}}
\newcommand{\J}{\cJ}
\newcommand{\cM}{\mathcal{M}}
\newcommand{\cN}{\mathcal{N}}
\newcommand{\cP}{\mathcal{P}}
\newcommand{\cK}{\mathcal{K}}
\newcommand{\K}{\cK}
\newcommand{\continuum}{\mathfrak{c}}
\newcommand{\cc}{\continuum}
\newcommand{\bnumber}{\mathfrak{b}}
\newcommand{\bb}{\bnumber}
\newcommand{\dnumber}{\mathfrak{d}}
\newcommand{\pnumber}{\mathfrak{p}}
\newcommand{\pp}{\mathfrak{p}}
\newcommand{\dd}{\dnumber}
\newcommand{\cf}{\mathrm{cf}} 
\DeclareMathOperator{\add}{add}
\newcommand{\fin}{\mathrm{Fin}}
\newcommand{\Fin}{\mathrm{Fin}}
\newcommand{\ED}{\mathcal{ED}} 
\newcommand{\CB}{\mathrm{CB}}  
\newcommand{\nwd}{\mathrm{NWD}}  
\newcommand{\mz}{\mathrm{NULL}}  
\newcommand{\conv}{\mathrm{CONV}} 
\newcommand{\ctbl}{\mathrm{CTBL}} 
\newcommand{\FIN}{\mathrm{Fin}}
\begin{document}


\title{Two $\mathfrak{b}$ or not two $\mathfrak{b}$?}


\author{Rafa\l{} Filip\'{o}w}
\address[Rafa\l{}~Filip\'{o}w]{Institute of Mathematics\\ Faculty of Mathematics, Physics and Informatics\\ University of Gda\'{n}sk\\ ul.~Wita Stwosza 57\\ 80-308 Gda\'{n}sk\\ Poland}
\email{Rafal.Filipow@ug.edu.pl}
\urladdr{http://mat.ug.edu.pl/~rfilipow}

\author[Adam Kwela]{Adam Kwela}
\address[Adam Kwela]{Institute of Mathematics\\ Faculty of Mathematics\\ Physics and Informatics\\ University of Gda\'{n}sk\\ ul.~Wita  Stwosza 57\\ 80-308 Gda\'{n}sk\\ Poland}
\email{Adam.Kwela@ug.edu.pl}
\urladdr{https://mat.ug.edu.pl/~akwela}


\date{\today}


\subjclass[2010]{Primary: 
03E17, 
03E05. 
Secondary:
03E35, 
03E15. 
}


\keywords{bounding number,
cardinal characteristics of the continuum,
Rothberger gap,
gap in a quotient Boolean algebra,
ideal, 
$P$-ideal, 
Borel ideal.
}


\begin{abstract}
The paper is devoted to comparison of two generalizations  of the bounding number $\bnumber$.
\end{abstract}


\maketitle


\section{Introduction}

Let $\I$ be an ideal on the set $\omega$ of all nonnegative integers.
The bounding number $\bnumber_\I$ is the least cardinality of an unbounded set in the poset $(\omega^\omega,\leq^\I)$ where $f\leq^\I g$ if $\{n: f(n)> g(n)\}\in \I$.
Then the ordinary bounding number $\bnumber$ is equal to $\bnumber_\fin$ and it is not difficult to show that 
$\bnumber_\I$ is a regular cardinal
and $\bnumber \leq  \bnumber_\I \leq \dnumber$ for every ideal $\I$. In the realm of maximal ideals, the number $\bnumber_\I$ can take various values, for instance there is always a maximal ideal $\I$ with 
$\bnumber_\I=\cf(\dnumber)$ and it is consistent that there exists a maximal ideal $\I$ with $\bnumber_\I=\kappa$ for any regular uncountable $\kappa\leq \continuum$ \cite{Canjar2,Canjar,CanjarPhD}.
However, $\bnumber_\I=\bnumber$ for every ideal $\I$ with the Baire property \cite{MR2537837,MR3624786}  which makes the number $\bnumber_\I$ dull in the realm of definable ideals.

In the literature there are known at least two other ideal-like versions of the bounding number (see below for definitions) which seem more interesting. Below we describe both those versions.
The aim of the paper is to compare these two versions and to calculate values of these numbers for some ideals. Moreover, as a by-product, we answer a question of Kankaanp\"{a}\"{a} \cite{MR3032424}.

Now we start our description of the first version. By an \emph{$\I$-gap} we mean a pair $(\cA,\cB)$ of subfamilies of $\I^+$ such that the families $\cA$ and $\cB$ are \emph{$\I$-orthogonal} (i.e. $A\cap B\in\I$ for all $A\in \cA$ and $B\in \cB$) and there is no $C\subseteq\omega$ which \emph{$\I$-separates} $(\cA,\cB)$ (i.e. there is no $C\subseteq\omega$ with $A\cap C\in \I$ and $B\setminus C\in \I$ for each $A\in \cA$ and $B\in \cB$). If $(\cA,\cB)$ is an $\I$-gap such that $|\cA|=\kappa$ and $|\cB|=\lambda$, then we say that it is an \emph{$\I$-$(\kappa,\lambda)$-gap}.

The study of $\Fin$-gaps has a long history \cite{MR1234288}. For instance, Hausdorff proved that there is a $\fin$-$(\omega_1, \omega_1)$-gap, Rothberger showed that there is a $\Fin$-$(\omega, \bnumber)$-gap and the number $\bnumber$ is the least cardinal $\kappa$ such that there is a $\Fin$-$(\omega, \kappa)$-gap.
On the other hand,  
Todor\v{c}evi\'{c} proved (under PFA) that any $\Fin$-gap is either of type
$(\omega_1 , \omega_1)$ or $(\omega,\bnumber)$.
It is obvious that there is no $\I$-gap for a maximal ideal $\I$.
Recently, there is a growing research on $\I$-gaps for definable ideals $\I$ (e.g. Borel or analytic).  
For instance, Mazur \cite{MR1124539} proved that there is $\I$-$(\omega_1,\omega_1)$-gap for every $F_\sigma$ ideal $\I$, and   Todor\v{c}evi\'{c} \cite{MR1610488} showed that
the $\I$-gaps spectrum 
for $F_\sigma$ and analytic P-ideals $\I$ contains the $\fin$-gap spectrum.

Any $\I$-$(\omega,\kappa)$-gap is called an \emph{$\I$-Rothberger gap} \cite{MR3247032} and the smallest $\kappa$ for which there is an $\I$-Rothberger gap is called  \emph{Rothberger number of $\I$} which we will denote by 
$\bnumber_{R}(\I)$ (we put $\bnumber_{R}(\I)=\cc^+$ in the case there is no $\I$-gap). 
The ordinary bounding number $\bnumber$ is equal to $\bnumber_R(\Fin)$ and $\bnumber_{R}(\I)=\continuum^+$
for maximal ideals $\I$. 
Moreover, it is known \cite[Remark~2.4(6,7)]{MR3247032} that $\bnumber_{R}(\I)=\bnumber$
for analytic P-ideals $\I$
and $\bnumber_{R}(\I)\leq \bnumber$ for $F_\sigma$ ideals $\I$. 
Interestingly,  $\bnumber_R(\I)=\omega_1$
for some  $F_\sigma$ ideals $\I$ \cite{MR3247032}, and consequently 
$\bnumber_R(\I)$ can be consistently strictly smaller than $\bnumber$.

Now we describe the second version of the bounding number that we will be interested in. Let $\geq_\I$ denote the binary relation on $\omega^\omega$ given by 
$$f\geq_\I g \iff  \{n\in\omega: f(n)<g(n)\}\in\I.
$$
Let $\cD_\I$ denote  the family of all functions $f\in\omega^\omega$ such that $f^{-1}(\{n\})\in\I$ for all $n$. Then 
the least cardinality of an unbounded family in the poset $(\cD_\I,\geq_\I\cap (\cD_\I\times\cD_\I))$ will be  denoted by $\bb(\I)$.
The ordinary bounding number $\bnumber$ is equal to $\bnumber(\Fin)$.
In the realm of maximal ideals, the number
$\bnumber(\I)$ can take various values, for instance it is consistent that there exists a maximal ideal $\I$ with $\bnumber(\I) = \kappa$ for any regular
uncountable $\kappa\leq \continuum$ \cite{Canjar2,Canjar,CanjarPhD}.
If $\I$ is a P-ideal, then $\bnumber\leq \bnumber(\I)\leq \dnumber$, and $\bnumber(\I)=\bnumber$ for P-ideals with the Baire property \cite{MR4472525}.
However, there are $F_\sigma$ ideals $\I$ with  $\bnumber(\I)=\omega_1$ \cite{MR4472525}, and consequently $\bnumber(\I)$ can be consistently strictly smaller than $\bnumber$.

Below, we list the main results of this paper.
\begin{itemize}
    \item 
$\bnumber(\I)\leq\bnumber_{R}(\I)$ for every ideal $\I$ (Theorem~\ref{thm:b-vs-b-inequality}),

\item 
It is consistent (Theorems~\ref{thm:bR-graeter-thatn-b} and \ref{thm:b-less-than-bR}) that there are ideals $\I$ with
\begin{itemize}
\item $\bb=\bnumber(\I)<\bnumber_{R}(\I)\leq \continuum$, 
\item $\bb<\bnumber(\I)<\bnumber_{R}(\I)\leq \continuum$,
\item  $\dd<\bb(\I)=\bb_R(\I)\leq\cc$,
\item $\bb<\bnumber(\I)=\bnumber_{R}(\I)\leq \dd=\continuum$.
\end{itemize}

\item 
$\bnumber(\I)=\bnumber_{R}(\I)=\omega_1$ for every ideal $\I$ such that   $\conv\subseteq\I\subseteq\ctbl$ (Theorem~\ref{thm:b-for-conv-and-ctbl}).
In particular, 
	\begin{itemize}
		\item 
	$\bnumber(\conv) = \bnumber_{R}(\conv) = \omega_1$,
\item 	
	$\bnumber(\ctbl) = \bnumber_{R}(\ctbl) = \omega_1$,
	\item 
	$\bnumber(\CB_{\alpha}) = \bnumber_{R}(\CB_{\alpha}) = \omega_1$ for each $\alpha\geq 2$. 
\end{itemize}

\item 
$\bb_R(\I)=\bb(\I)=\bb$
for every P-ideal $\I$ with the Baire property (Theorem~\ref{thm:all-ideal-b-equals-b-for-BP-ideals}).

\item 
$\bnumber_R(\mz)\geq \pnumber$, and consistently 
there is no  $\mz$-$(\omega,\add(\mz))$-gap (Theorem~\ref{thm:b-R-for-NULL}). This answers a question of Kankaanp\"{a}\"{a} \cite{MR3032424}.
\end{itemize}


\section{Preliminaries}

An \emph{ideal} on a set $X$ is a nonempty family $\I\subseteq\cP(X)$ which is closed under taking finite unions (i.e.~if $A,B\in \I$ then $A\cup B\in\I$) and   subsets (i.e.~if $A\subseteq B$ and $B\in\I$ then $A\in\I$), contains all finite subsets of $X$ and $X\notin \I$.
For an ideal $\I$,  we write $\I^+=\{A\subseteq X: A\notin\I\}$ and call it the \emph{coideal of $\I$}.

If $X$ is a countable infinite set,  we consider $2^X=\{0,1\}^X$ as a product (with the product topology) of countably many copies of a discrete topological space $\{0,1\}$ (which is homeomorphic to the Cantor space).
By identifying subsets of $X$  with their characteristic functions,
we equip $\cP(X)$ with the topology of the space $2^X$ and therefore
we can assign topological notions to ideals on $X$.
In particular, an ideal $\I$ is \emph{Borel} (\emph{analytic}, has the \emph{Baire property}, resp.) if $\I$ is a Borel  (analytic, with  the Baire property, resp.) subset of $\cP(X)$.

An ideal $\I$ is a \emph{P-ideal} if  for every sequence $\langle A_n : n\in\omega\rangle$ of elements of $\I$ there is $A\in\I$ such that $A_n\setminus A$ is finite for all $n\in\omega$. 

The vertical section of a set   $A\subseteq X\times Y$ at a point $x\in X$ is defined by $A_{(x)} = \{y\in Y : (x,y)\in A\}$. 
For ideals $\I$ and $\J$ on $X$ and $Y$ respectively, we define the following  ideal (called the \emph{Fubini product} of $\I$ and $\J$):
$$\I\otimes \J = \{A\subseteq X\times Y: \{x\in X:A_{(x)}\notin\J\}\in \I\}.$$
We note that the above construction of the product of ideals also gives an ideal when applied to $\I=\{\emptyset\}$ or $\J=\{\emptyset\}$ which are not ideals according to the definitions (they do not contain all finite sets). Namely, we will consider the following ideals:
\begin{equation*}
    \begin{split}
        \I\otimes \{\emptyset\} 
        &= 
        \{A\subseteq X\times Y: \{x\in X:A_{(x)}\neq \emptyset\}\in \I\},\\
        \{\emptyset\}\otimes \J 
        &= 
        \{A\subseteq X\times Y: A_{(x)}\in \J \text{ for every $x$}\}.
    \end{split}
\end{equation*}



In the studies of $\bnumber_{R}(\I)$, we can restrict to $\I$-gaps satisfying an additional property that  $\cA$ is a partition of $\omega$ \cite[Lemma~2.2]{MR3247032}.
Consequently, a  necessary condition for the number $\bnumber_{R}(\I)$ to be well defined (i.e.~to be $\leq \continuum$) is the existence of an infinite  partition of $\omega$ consisting of $\I$-positive sets. 
On the other hand, such a partition exists if and only if $\I$ is not a direct sum (see page~\pageref{def:direct-sum} for the definition) of finitely many maximal ideals
\cite[Proposition 4.3]{FKL}.
In particular, such a partition exists for every ideal $\I$ with the \emph{hereditary Baire property} (i.e., $\I\restriction X = \{A\cap X: A\in \I\}$ has the Baire property for every $X\in\I^+$) \cite[Proposition 2.5]{FKL}. Note that Borel and analytic ideals have the hereditary Baire property.
The following lemma gives a sufficient condition for $\bnumber_R(\I)$ to be well defined.

\begin{lemma}
\label{lem:when-bR-exists-charaterization}
	 $\bnumber_{R}(\I)\leq \continuum$ if and only if 
	there exists a partition $\{A_n:n<\omega\}$ of $\omega$ such that 
	$A_n\in \I^+$ for each $n$ and 
	$$\forall C\subseteq\omega \left[
	\forall n\, (C\cap A_n\in \I)
	\implies 
	\exists B\in \I^+(B\cap C=\emptyset \land \forall n\, (B\cap A_n\in \I))\right].
	$$
\end{lemma}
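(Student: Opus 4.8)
The plan is to read the displayed implication as a reformulation of the statement ``there is an $\I$-Rothberger gap whose first coordinate is the partition $\{A_n:n<\omega\}$''. Recall that $\bnumber_R(\I)\leq\cc$ means precisely that there is an $\I$-$(\omega,\kappa)$-gap for some $\kappa\leq\cc$, and by \cite[Lemma~2.2]{MR3247032} one may always arrange the first coordinate $\cA$ of such a gap to be a partition of $\omega$ into $\I$-positive sets. So I will work with a countable partition $\cA=\{A_n:n<\omega\}\subseteq\I^+$ and use the key observation that, because each $A_n$ lies in $\cA$, the only sets $C$ that can possibly $\I$-separate a pair $(\cA,\cB)$ are those with $C\cap A_n\in\I$ for all $n$; this is exactly the hypothesis of the displayed implication. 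I will prove the two directions separately.

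For the ``only if'' direction, I would fix a partition $\{A_n:n<\omega\}$ of $\omega$ into $\I$-positive sets together with a family $\cB\subseteq\I^+$ that is $\I$-orthogonal to $\cA$ and such that $(\cA,\cB)$ is an $\I$-gap (this is what Lemma~2.2 provides once $\bnumber_R(\I)\leq\cc$). Given $C\subseteq\omega$ with $C\cap A_n\in\I$ for all $n$: since $C$ satisfies the $\cA$-side of the separation requirement but no set $\I$-separates $(\cA,\cB)$, the $\cB$-side must fail, so there is $B'\in\cB$ with $B'\setminus C\in\I^+$. Putting $B=B'\setminus C$ gives $B\cap C=\emptyset$, $B=B'\setminus C\in\I^+$, and $B\cap A_n\subseteq B'\cap A_n\in\I$ by orthogonality; this $B$ witnesses the conclusion of the displayed implication.

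For the ``if'' direction, I would assume the combinatorial condition holds for a partition $\{A_n:n<\omega\}$ of $\omega$ into $\I$-positive sets. Let $\mathcal{C}=\{C\subseteq\omega:\forall n\,(C\cap A_n\in\I)\}$ and, for each $C\in\mathcal{C}$, use the condition to choose $B_C\in\I^+$ with $B_C\cap C=\emptyset$ and $B_C\cap A_n\in\I$ for all $n$; then set $\cB=\{B_C:C\in\mathcal{C}\}$. Here $|\cB|\leq|\mathcal{C}|\leq\cc$, the family $\cB$ consists of $\I$-positive sets and is $\I$-orthogonal to $\cA$ by construction, and $\cB$ is nonempty (take $C=\emptyset\in\mathcal{C}$). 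Finally $(\cA,\cB)$ is an $\I$-gap: any candidate separator $C$ must satisfy $C\cap A_n\in\I$ for all $n$, hence $C\in\mathcal{C}$, and then $B_C\in\cB$ has $B_C\setminus C=B_C\in\I^+$, so $C$ does not $\I$-separate $(\cA,\cB)$. Since $\cA$ is countable and $\cB$ has size $\leq\cc$, this is an $\I$-$(\omega,\kappa)$-gap for some $\kappa\leq\cc$, whence $\bnumber_R(\I)\leq\cc$.

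The argument is essentially bookkeeping, so I do not expect a serious obstacle; the one point that needs care is the reduction to a partition via \cite[Lemma~2.2]{MR3247032}, which is what guarantees that ``being a separator'' forces $C\cap A_n\in\I$ and thereby makes the displayed condition equivalent to the non-existence of a separator. A minor point to verify is that the constructed pair genuinely yields a gap of type $(\omega,\kappa)$ with $\kappa\leq\cc$; this is automatic, since a finite $\cB$ can never form a gap (the set $\bigcup\cB$ would $\I$-separate), so necessarily $\omega\leq\kappa\leq\cc$.
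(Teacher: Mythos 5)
Your proof is correct and follows essentially the same route as the paper's: the forward direction is identical (reduce to a partition via \cite[Lemma~2.2]{MR3247032} and set $B=B'\setminus C$), and in the backward direction the paper simply takes $\cB$ to be the family of \emph{all} $\I$-positive sets orthogonal to $\cA$ rather than choosing one witness $B_C$ per $C$ -- an immaterial difference, since either family has size at most $\continuum$ and fails to be separated for the same reason.
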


\begin{proof}
	($\implies$)
	Let  $(\cA,\cB)$ be a Rothberger $\I$-gap with $|\cB|\leq \continuum$. Without loss of generality, we can assume that $\cA=\{A_n:n<\omega\}$ is a partition of $\omega$ into $\I$-positive sets \cite[Lemma~2.2]{MR3247032}.
	Take any $C\subseteq\omega$ such that $C\cap A_n\in \I$ for each $n$. Since $(\cA,\cB)$ is an $\I$-gap, there is $B\in \cB$ such that $B'=B\setminus C\notin\I$. Then $B'\in\I^+$, $B'\cap C=\emptyset$  and $B'\cap A_n\subseteq B\cap A_n\in \I$ for all $n$.
	
	($\impliedby$)
	Let  $\{A_n:n<\omega\}$ be a partition of $\omega$ into $\I$-positive sets with the required property.
Let  $\cB=\{B\subseteq \omega: B\in \I^+ \text{ and } B\cap A_n\in \I\text{ for all }n\}$.
The pair $(\cA,\cB)$ is $\I$-orthogonal.
Once we show that $(\cA,\cB)$ cannot be $\I$-separated, 
$\bnumber_{R}(\I)\leq |\cB|\leq \continuum$ and the proof will be finished.
	Take $C\subseteq\omega$ such that $C\cap A_n\in \I$ for each $n$. Then there is $B\in\I^+$ such that $B\cap C=\emptyset$ and $B\cap A_n\in \I$ for each $n$.
	Since $B\in \cB$ and $B\setminus C=B\notin\I$, $C$ does not $\I$-separate $(\cA,\cB)$.
\end{proof}

An ideal $\I$ on $X$ is a \emph{$P^+$-ideal} (\emph{$P^+(\I)$-ideal}, resp.)  if for any decreasing sequence $\langle A_n : n\in\omega\rangle$ of $\I$-positive sets there is $A \in \I^+$ such that
$A\setminus A_{n}$ is finite 
($A\setminus A_{n}\in \I$, resp.)
for every $n$.
Every $F_\sigma$-ideal is a $P^+$-ideal \cite[Lemma 3.2.4]{alcantara-phd-thesis}.
Obviously, every $P^+$-ideal is $P^+(\I)$. 
On the other hand, 
the ideal 
$$\Fin^2 = \FIN\otimes \FIN = \{A\subseteq \omega\times \omega: A_{(n)} \text{ is finite for all but finitely many $n$}\}$$ 
is $P^+(\FIN\otimes\FIN)$-ideal but it is not a $P^+$-ideal \cite[Proposition~2.6(2)]{MR4448270}.
On can also show   that the ideal 
$$\conv = \{A\subseteq \Q\cap [0,1]: \text{$A$ has finitely many accumulation points in $[0,1]$}\}$$ 
is $P^+(\conv)$-ideal but it is not a $P^+$-ideal.
Each $P^+(\I)$-ideal has $BW$ property (for definition see \cite{MR2961261}), so 
the following ideals (which do not have $BW$ property \cite{MR2320288}) are not $P^+(\I)$-ideals:
\begin{equation*}
    \begin{split}
\I_d &= \left\{A\subseteq\omega: \limsup_{n\to\infty} \frac{|A\cap n|}{n}=0\right\},
\\
\nwd & = \{A\subseteq\Q\cap [0,1]: \text{$A$ is nowhere dense in $[0,1]$}\}.
    \end{split}
\end{equation*}

The following proposition gives a sufficient condition for $\bnumber_R(\I)$ to be well defined.

\begin{proposition}
Each of the following conditions guarantees  (separately)  that $\bnumber_{R}(\I)\leq \continuum$. 
	\begin{enumerate}
		\item 
$\I$ is a $P^+(\I)$-ideal and there exists an infinite partition of $\omega$ consisting of $\I$-positive sets.

		\item 	
There is $X\in\I^+$ such that $\I\restriction X$ is a $P^+(\I\restriction X)$-ideal and 
there exists an infinite partition of $X$ consisting of $\I$-positive sets.

\item $\I$ is a Borel ideals and 
and there is $X\in\I^+$ such that $\I\restriction X$ is a $P^+(\I\restriction X)$-ideal.
	\end{enumerate}
\end{proposition}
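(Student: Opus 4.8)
The plan is to verify, in each case, the criterion of Lemma~\ref{lem:when-bR-exists-charaterization}. Observe first that (1) is literally the case $X=\omega$ of (2), since $\I\restriction\omega=\I$; so it suffices to prove (2) in full and then reduce (3) to it.

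For (2), let $\{A_n:n<\omega\}$ be the given partition of $X$ into $\I$-positive sets and enlarge it to a partition of $\omega$ by replacing $A_0$ with $A_0\cup(\omega\setminus X)$; every piece stays $\I$-positive. I would show that this partition witnesses the lemma. Fix $C\subseteq\omega$ whose intersection with each piece lies in $\I$; in particular $C\cap A_n\in\I$ for all $n$. Put
\[
E_n=\bigcup_{k\geq n}(A_k\setminus C)\subseteq X.
\]
Then $\langle E_n:n<\omega\rangle$ is decreasing, $E_0=X\setminus C$, and each $E_n$ is $\I$-positive because it contains $A_n\setminus C$, which is $\I$-positive (it is $A_n$ minus the $\I$-set $A_n\cap C$). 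Regarding the $E_n$ as $(\I\restriction X)$-positive subsets of $X$, the $P^+(\I\restriction X)$ property produces $B\in\I^+$ with $B\subseteq X$ and $B\setminus E_n\in\I$ for every $n$.

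Now I would read off the conclusion. From $B\setminus E_0=B\cap C\in\I$ it follows that $B'=B\setminus C$ is still $\I$-positive and satisfies $B'\cap C=\emptyset$. Because $A_n$ is disjoint from $\bigcup_{k\geq n+1}A_k\supseteq E_{n+1}$, we get $B'\cap A_n\subseteq B\cap A_n\subseteq B\setminus E_{n+1}\in\I$ for every $n$; moreover $B'$ meets the enlarged piece $A_0\cup(\omega\setminus X)$ only in $B'\cap A_0$ since $B'\subseteq X$. Thus $B'$ is $\I$-small on every piece of the partition and disjoint from $C$, which is exactly what Lemma~\ref{lem:when-bR-exists-charaterization} demands, so $\bnumber_{R}(\I)\leq\continuum$. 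For (3) I would simply supply the partition missing from the hypotheses of (2): if $\I$ is Borel then $\I\restriction X$ is a Borel ideal on $X$ (it is the preimage of $\I$ under the inclusion $2^X\hookrightarrow 2^\omega$), hence has the hereditary Baire property, so by \cite[Proposition~2.5]{FKL} applied to $\I\restriction X$ there is an infinite partition of $X$ into $(\I\restriction X)$-positive, that is $\I$-positive, sets. Together with the assumed $P^+(\I\restriction X)$ property this places us back in case (2), giving $\bnumber_{R}(\I)\leq\continuum$.

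The crux is the diagonalization in (2): one must extract from $\langle E_n\rangle$ a single $\I$-positive $B$ that is simultaneously disjoint from $C$ and $\I$-small on every $A_n$, using only that $P^+(\I\restriction X)$ thins sets modulo $\I$ rather than modulo finite. The two points requiring care are that this weaker thinning yields $B$ only up to an $\I$-error, repaired by passing to $B'=B\setminus C$ to secure genuine disjointness from $C$, and that one must check the tail sets $E_n$ remain $\I$-positive and decreasing so that the $P^+(\I\restriction X)$ hypothesis genuinely applies.
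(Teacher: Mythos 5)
Your proof is correct and follows essentially the same route as the paper: the same criterion from Lemma~\ref{lem:when-bR-exists-charaterization} and the same diagonalization against the decreasing tail sets $\bigcup_{k\geq n}(A_k\setminus C)$ using the $P^+$ property, with item (3) reduced to (2) via the hereditary Baire property exactly as in the paper. The only organizational difference is that you prove (2) directly by extending the partition of $X$ to a partition of $\omega$ and verifying the lemma for $\I$ itself, whereas the paper applies item (1) to $\I\restriction X$ and then invokes the inequality $\bnumber_{R}(\I)\leq \bnumber_{R}(\I\restriction X)$; both are sound.
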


\begin{proof}
	(1) 
    Let $\{A_n:n<\omega\}$ be a partition of $\omega$ such that $A_n\in \I^+$ for each $n$. We will use Lemma~\ref{lem:when-bR-exists-charaterization}. Take any $C\subseteq \omega$ with $C\cap A_n\in \I$ for all $n$.
    Define 
$$D_n=\bigcup \{ A_k\setminus C : k\geq n\}$$
for each $n$.
Since
$D_n\supseteq D_{n+1}$ and $D_n\notin \I$ for all $n$
and $\I$ is a $P^+(\I)$-ideal, there exists $B\in\I^+$ such that $B\setminus D_n\in \I$ for each $n$. In particular, $B'=B\cap D_0\notin\I$. Then $B'\cap A_n \subseteq B\setminus D_{n+1}\in\I$ for each $n$ and $B'\cap C\subseteq D_0\cap C=\emptyset$, so  
$\bnumber_{R}(\I)\leq \continuum$
by Lemma~\ref{lem:when-bR-exists-charaterization}.

	(2) Apply item (1) to the ideal $\I\restriction X$ and use the fact that $\bnumber_{R}(\I)\leq \bnumber_{R}(\I\restriction X)$ \cite[Remark~2.4(2)]{MR3247032}. 

    (3) Follows from item (2) and the fact that Borel ideals have the hereditary Baire property (hence there is an infinite partition of $\omega$ consisting of $\I$-positive sets \cite[Proposition 2.5]{FKL}).
\end{proof}

\begin{question}
Is $\bnumber_{R}(\I)\leq \continuum$ for each Borel ideal $\I$?
\end{question}


\section{\texorpdfstring{$\bnumber(\I)$}{b-I} is less than or equal to  \texorpdfstring{$\bnumber_{R}(\I)$}{b-R-I}}

For an ideal $\I$, we use the following notation:
\begin{itemize}
    \item $\widehat{\cP}_\I$ is the family of all sequences $\langle A_n : n\in\omega\rangle$ such that 
	$A_n\in \I$ for  all $n\in\omega$
	and
	$A_n\cap A_k=\emptyset$ for $n\neq k$;
    \item $\cP_\I$ is the family of all sequences 
	$\langle A_n : n\in\omega\rangle \in \widehat{\cP}_\I$ with 
	$\bigcup\{A_n: n\in \omega\} = \omega$.
\end{itemize}

Using the above notation, we can characterize
the number $\bnumber(\I)$ in the following more technical though useful manner.

\begin{theorem}[{\cite[Theorem~3.10]{MR4472525}}]
\label{thm:b-characterization}
For any ideal $\I$ on $\omega$, 
\begin{equation*}
\begin{split}
\bnumber(\I)  = 
\min \left\{|\cE|:\cE\subseteq\widehat{\cP}_\I 
\text{ and } \right.
& 
\text{for every  $\langle A_n\rangle\in\cP_\I$ there exists $\langle E_n\rangle\in\cE$ with } 
\\ &
\left. 
\text{$\bigcup_{n\in\omega}\left(A_n\cap \bigcup_{i\leq n}E_i\right)\notin\I$}  
\right\}.
\end{split}
\end{equation*}
	
\end{theorem}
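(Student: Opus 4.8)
The plan is to establish the displayed equality by proving the two inequalities separately, after rewriting both sides in a uniform ``no common bound'' form. The first step is to fix the translation between functions and sequences. To each $\langle A_n\rangle\in\cP_\I$ I attach the total function $f\in\cD_\I$ determined by $f^{-1}(\{n\})=A_n$; this is a bijection between $\cP_\I$ and $\cD_\I$. To each $\langle E_n\rangle\in\widehat{\cP}_\I$ I attach the \emph{partial} function $e$ with domain $\bigcup_n E_n$ given by $e^{-1}(\{n\})=E_n$. The identity that powers the whole argument is
\[
\bigcup_{n}\Bigl(A_n\cap\bigcup_{i\le n}E_i\Bigr)=\{k\in\textstyle\bigcup_n E_n:\ e(k)\le f(k)\},
\]
because $k$ lies in the left-hand set exactly when $f(k)=n$ and $e(k)\le n$ for some $n$. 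Consequently a family $\cE$ is admissible for the right-hand minimum precisely when there is no single $f\in\cD_\I$ with $\{k:e(k)\le f(k)\}\in\I$ for every $\langle E_n\rangle\in\cE$; that is, when $\cE$ has no common element strictly $\I$-dominated from above by all of its partial functions. Unwinding the definition on the left, $\bnumber(\I)$ is the least size of an $\cF\subseteq\cD_\I$ having no common $\geq_\I$-upper bound, i.e.\ no $g\in\cD_\I$ with $\{k:g(k)<m(k)\}\in\I$ for all $m\in\cF$. Writing $\mathsf b$ for the right-hand cardinal, the task is thus to match ``no common upper bound'' with ``no common strictly dominated element''.

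These two conditions are order-dual, so the heart of the proof is a growth-reversal that converts $\geq_\I$-domination into strict $\I$-domination from below. For $\I=\Fin$ this is the classical correspondence between a function tending to infinity and the interval partition it induces, under which slowly growing functions become coarse partitions and fast ones become fine partitions; I would implement its analogue for an arbitrary $\I$ by fixing once and for all a partition $\langle P_j\rangle\in\cP_\I$ (available because $\cD_\I\ne\emptyset$ is the standing hypothesis) and using it both to reverse cumulative sequences and to totalize partial functions without leaving $\cD_\I$. For the inequality $\bnumber(\I)\le\mathsf b$ I start from an admissible $\cE$, attach to every $\langle E_n\rangle\in\cE$ the reversed total function $m_E\in\cD_\I$, and argue that $\cF=\{m_E:\langle E_n\rangle\in\cE\}$ has no common upper bound: a hypothetical $g$ bounding $\cF$ would, after applying the reversal to $g$, produce a single $f\in\cD_\I$ strictly dominated by all the $e_E$, contradicting admissibility of $\cE$. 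The reverse inequality $\mathsf b\le\bnumber(\I)$ is symmetric: from an unbounded $\cF$ I reverse each $m\in\cF$ into a sequence $\langle E_n^m\rangle\in\widehat{\cP}_\I$ and convert a hypothetical common strictly dominated $f$ into a common upper bound of $\cF$.

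I expect the reversal, together with the verification that it transports $\I$-membership, to be the main obstacle. Concretely, each inequality reduces to checking a set inclusion of the shape $\{k:g(k)<m_E(k)\}\subseteq\{k:e_E(k)\le f(k)\}$ (and its mirror image), so that one $\I$-set certifies the other; this is exactly where the displayed identity and the monotonicity of the cumulative sequences $\langle\bigcup_{i\le n}E_i\rangle$ enter. Two secondary technicalities must be dispatched along the way: first, members of $\widehat{\cP}_\I$ are only partial, so the points outside $\bigcup_n E_n$ (morally the value $+\infty$) have to be absorbed without destroying membership in $\cD_\I$, which is precisely the slack that $\widehat{\cP}_\I$ enjoys over $\cP_\I$ and which the fixed partition $\langle P_j\rangle$ handles; and second, the strict inequality in $\{g<m\}$ against the non-strict one in $\{e\le f\}$ is reconciled by a harmless index shift built into the reversal. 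Once the reversal is pinned down, both inclusions become routine and the two inequalities close the proof.
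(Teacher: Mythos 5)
The paper does not actually prove this statement---it is quoted from \cite[Theorem~3.10]{MR4472525}---so I can only judge your argument on its own terms. Your translation is the right starting point, and your identity $\bigcup_{n}(A_n\cap\bigcup_{i\le n}E_i)=\{k\in\bigcup_nE_n:\,e(k)\le f(k)\}$ is correct and really is the crux. The error is in how you unwind the left-hand side. In the poset $(\cD_\I,\geq_\I)$ a bound for $\cF$ is a $g$ with $m\geq_\I g$ for every $m\in\cF$, i.e.\ $\{k:m(k)<g(k)\}\in\I$ --- a common mod-$\I$ \emph{minorant} --- not, as you write, a $g$ with $\{k:g(k)<m(k)\}\in\I$ for all $m$. (This orientation is forced by the paper's own use of the definition in the proof of Lemma~\ref{lem:b-bR-for-I-times-empty}: unboundedness of $\cF$ there means ``for each $g\in\cD_\I$ there is $f\in\cF$ with $f\not\geq_\I g$''.) With the correct orientation both sides say ``no common mod-$\I$ minorant'': non-admissibility of $\cE$ is $\exists f\,\forall e\;\{k:e(k)\le f(k)\}\in\I$, which by your identity is exactly $e\geq_\I(f+1)$ for all $e\in\cE$, while boundedness of $\cF$ is $\exists g\,\forall m\; m\geq_\I g$. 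So there is no duality to bridge: both inequalities fall out of your identity together with precisely the two technicalities you flag (totalize each $\langle E_n\rangle\in\widehat{\cP}_\I$ by sending the $n$-th point outside $\bigcup_kE_k$ to $n$, which keeps every fiber in $\I$ and does not shrink the sets $\{e\le f\}$; and shift by $\pm1$ to pass between $<$ and $\le$, noting that $f+1$ and $f\dot-1$ stay in $\cD_\I$).

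Because of the misreading, you organize the whole proof around a ``growth-reversal'' converting common majorants into common minorants. That step is not merely unnecessary; it cannot exist for a general ideal. The least size of a family in $\cD_\I$ with no common mod-$\I$ \emph{majorant} always lies between $\bb$ and $\dd$: any $\cF$ with $|\cF|<\bb$ has a strictly increasing $\le^*$-bound, which majorizes mod $\I$ since $\Fin\subseteq\I$, and a $\le^*$-dominating family of increasing functions has no mod-$\I$ majorant because cofinite sets are never in a proper ideal. Yet the paper shows $\dd<\bb(\I)$ is consistent (Theorem~\ref{thm:bR-graeter-thatn-b}(2)), so the two orientations define provably different cardinals for some ideals, and no ZFC reversal of the kind you sketch can equate them. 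The classical function/interval-partition duality you invoke succeeds for $\Fin$ only because there both invariants happen to equal $\bb$. Fix the orientation and your own identity completes the proof.
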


With the aid of the above characterization, we can prove the following theorem.

\begin{theorem}
\label{thm:b-vs-b-inequality}
$\bnumber(\I)\leq\bnumber_{R}(\I)$ for every ideal $\I$.
\end{theorem}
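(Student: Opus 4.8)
The plan is to apply the combinatorial characterization of $\bnumber(\I)$ in Theorem~\ref{thm:b-characterization}. First I dispose of the trivial case: if there is no $\I$-gap then $\bnumber_R(\I)=\continuum^+$, while $\bnumber(\I)\leq\continuum$ always holds (the poset $(\cD_\I,\geq_\I)$ has no top element, since for $g\in\cD_\I$ the pointwise shift $g+1$ again lies in $\cD_\I$ and satisfies $g\not\geq_\I g+1$ as $\{n:g(n)<g(n)+1\}=\omega\notin\I$; hence $\cD_\I$ is unbounded and $\bnumber(\I)\leq|\cD_\I|\leq\continuum$). So I may assume $\bnumber_R(\I)\leq\continuum$ and fix an $\I$-Rothberger gap $(\cA,\cB)$ with $|\cB|=\bnumber_R(\I)$; by \cite[Lemma~2.2]{MR3247032} I may take $\cA=\{A_n:n<\omega\}$ to be a partition of $\omega$ into $\I$-positive sets. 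The goal is to exhibit a family $\cE\subseteq\widehat{\cP}_\I$ with $|\cE|\leq|\cB|$ that witnesses the minimum in Theorem~\ref{thm:b-characterization}, for this yields $\bnumber(\I)\leq|\cE|\leq\bnumber_R(\I)$.

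The family I take is $\cE=\{\langle B\cap A_n:n<\omega\rangle:B\in\cB\}$. Each of these sequences belongs to $\widehat{\cP}_\I$: the sets $B\cap A_n$ are pairwise disjoint because the $A_n$ are, and each $B\cap A_n\in\I$ because $\cA$ and $\cB$ are $\I$-orthogonal. Moreover $|\cE|\leq|\cB|=\bnumber_R(\I)$.

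It remains to check the covering condition of Theorem~\ref{thm:b-characterization} for $\cE$. Fix a partition $\langle P_n:n<\omega\rangle\in\cP_\I$; I must find $B\in\cB$ with $\bigcup_{n}\bigl(P_n\cap\bigcup_{i\leq n}(B\cap A_i)\bigr)\notin\I$. The crucial step is to feed the gap the auxiliary set
\[
C=\bigcup_{m}\bigl(A_m\cap\bigcup_{n<m}P_n\bigr).
\]
For each $m$ one has $A_m\cap C\subseteq\bigcup_{n<m}P_n\in\I$, so $C$ is $\I$-small on every piece of $\cA$. Since $(\cA,\cB)$ is an $\I$-gap, $C$ fails to $\I$-separate it, and because the $\cA$-side of the separation condition already holds, there must exist $B\in\cB$ with $B\setminus C\notin\I$. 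This $B$ is the one I want, by the identity
\[
B\setminus C=\bigcup_{n}\bigl(P_n\cap\bigcup_{i\leq n}(B\cap A_i)\bigr),
\]
which holds because both sides are contained in $B$ and, for $x\in B$ lying in $P_{n(x)}$ and $A_{m(x)}$, one has $x\in C$ iff $m(x)>n(x)$, whereas $x$ belongs to the right-hand union iff $m(x)\leq n(x)$. I expect the only real difficulty to be discovering the correct set $C$ --- engineered so that it is automatically $\I$-small on each $A_m$, thereby forcing the gap to supply a $B$ with $B\setminus C$ large --- after which verifying the displayed identity is routine bookkeeping.
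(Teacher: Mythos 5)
Your proof is correct and is essentially the paper's argument run in the contrapositive direction: both rest on the characterization of $\bnumber(\I)$ from Theorem~\ref{thm:b-characterization}, with the same witnessing family of traces $\langle B\cap A_n\rangle_{n}$ for $B\in\cB$ and the same diagonal set $C=\bigcup_{m}\bigl(A_m\cap\bigcup_{n<m}P_n\bigr)$. (One cosmetic remark: in your trivial case, the observation $g\not\geq_\I g+1$ only rules out bounds of the form $g+1$, so it does not literally show $\cD_\I$ is unbounded; the needed fact $\bnumber(\I)\leq\continuum$ is easiest to get from Theorem~\ref{thm:b-characterization} itself, e.g.\ by taking $\cE=\widehat{\cP}_\I$, since $\bigcup_n\bigl(A_n\cap\bigcup_{i\leq n}A_i\bigr)=\omega\notin\I$ for every $\langle A_n\rangle\in\cP_\I$.)
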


\begin{proof}
It is enough to show that $\lambda<\bnumber(\I)$  implies $\lambda<\bnumber_{R}(\I)$.
Let $\lambda<\bnumber(\I)$ and take families $\cA,\cB\subseteq \cP(\omega)$ such that $|\cA|=\omega$, $|\cB|=\lambda$, $\cA=\{A_n:n\in\omega\}$ is a partition of $\omega$ and $A\cap B\in \I$ for any $A\in \cA$, $B\in \cB$.
We have to show that there is $C\subseteq \omega$ such that $A\cap C\in \I$ and $B\setminus C\in \I$ for any $A\in \cA$ and  $B\in \cB$.

For each $A\in \cA$ and $B\in \cB$, we define $E_A^B=A\cap B$. Then $E_A^B\in \I$ for each $A,B$. Moreover, since $\cA$ is a partition of $\omega$, $E_{A_1}^B\cap E_{A_2}^B=\emptyset$ for any distinct $A_1,A_2\in \cA$.

Let $\cE=\{\{E_A^B:A\in \cA\}:B\in \cB\}$.
Since $|\cE|\leq |\cB| = \lambda<\bnumber(\I)$, 
there is a partition $\{C_n:n\in \omega\}$ of $\omega$ such that $C_n\in \I$ for each $n$, and 
$$\bigcup_{n\in \omega}\left(C_n\cap \bigcup_{i\leq n} E^B_{A_i}\right)\in \I$$ for each $B\in \cB$.
We claim that the set 
$$C = \bigcup_{n\in \omega}\left(A_n\cap \bigcup_{i<n} C_i\right)$$
separates the pair $(\cA,\cB)$.
First, we observe that 
$$
A_k\cap C 
= 
A_k\cap \bigcup_{n\in \omega}\left(A_n\cap \bigcup_{i<n} C_i\right)
=
A_k\cap \bigcup_{i<k}C_i
\in \I
$$
for every $k\in\omega$.
Second, we observe that 
\begin{equation*}
\begin{split}
B \setminus C 
& 
= 
B \setminus \bigcup_{n\in \omega}\left(A_n\cap \bigcup_{i<n} C_i\right)
=
\bigcup_{n\in \omega}\left(A_n\cap B\right) \setminus \bigcup_{n\in \omega}\left(A_n\cap \bigcup_{i<n} C_i\right)
\\ & 
=
\bigcup_{n\in \omega}\left(A_n\cap B \setminus  \bigcup_{i<n} C_i\right)
=
\bigcup_{n\in \omega}\left(E^B_{A_n} \setminus  \bigcup_{i<n} C_i\right)
\\ &
\subseteq
\bigcup_{n\in \omega}\left(C_n\cap \bigcup_{i\leq n }E^B_{A_i}\right)
\in\I
\end{split}
\end{equation*}
for every $B\in \cB$. 
\end{proof}

\begin{remark}
In Theorem \ref{thm:bR-characterization} we provide a characterization of $\bnumber_R(\I)$ which is similar to the above characterization of $\bnumber(\I)$, and then Theorem~\ref{thm:b-vs-b-inequality} 
will  easily follow from Theorem~\ref{thm:bR-characterization}, However, we decided to prove Theorem \ref{thm:b-vs-b-inequality} without the use of Theorem \ref{thm:bR-characterization}, since the above proof is much simpler.
\end{remark}

As a collorary of Theorem \ref{thm:bR-characterization},  we can  calculate $\bb(\I)$ for some well-known ideals (for the definitions of ideals mentioned in this theorem see e.g.~\cite{MR3247032}).

\begin{corollary}
	$\bnumber(\I)=\omega_1$ for 
the following ideals 
$\I$: (1)~$\ED$, (2)~$\ED_{\fin}$, 
(3)~fragmented not gradually fragmented ideals $\I = \I_{\{(a_i,\phi_i):j\in\omega\}}$ such that all $\phi_i$ are uniform submeasures
and 
(4)~somewhere tall fragmented ideals 
$\I = \I_{\{(a_i,\phi_i):j\in\omega\}}$ such that all $\phi_i$ are measures.
\end{corollary}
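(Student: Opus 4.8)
The plan is to prove, for each of the four ideals, the two bounds $\omega_1\le\bnumber(\I)$ and $\bnumber(\I)\le\omega_1$ separately. The lower bound holds for \emph{every} ideal and is routine, so I would dispose of it first: the poset $(\cD_\I,\ge_\I)$ is $\sigma$-directed. Indeed, given $f_j\in\cD_\I$ for $j<\omega$, the function $g(n)=\max_{j\le n}f_j(n)$ again lies in $\cD_\I$, because each fibre satisfies $g^{-1}(\{k\})\subseteq\{n:f_0(n)\le k\}=\bigcup_{l\le k}f_0^{-1}(\{l\})\in\I$, and it satisfies $g\ge_\I f_j$ since $\{n:g(n)<f_j(n)\}\subseteq\{0,\dots,j-1\}\in\I$. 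Hence no countable subset of $\cD_\I$ is unbounded and $\bnumber(\I)\ge\omega_1$. For the upper bound I would invoke Theorem~\ref{thm:b-vs-b-inequality}, reducing the task to $\bnumber_{R}(\I)\le\omega_1$, that is (via Theorem~\ref{thm:bR-characterization}, equivalently by the definition and Lemma~\ref{lem:when-bR-exists-charaterization}) to producing an $\I$-Rothberger gap of type $(\omega,\omega_1)$.

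Before treating the general fragmented cases I would reduce the two named examples to them. The ideal $\ED_\fin$ is the fragmented ideal whose blocks are the columns of the triangle $\Delta=\{(n,m):m\le n\}$ equipped with the counting measure $\phi_n(S)=|S|$ (so $A\in\ED_\fin$ iff $\sup_n|A_{(n)}|<\infty$); since the $\phi_n$ are additive and $\ED_\fin$ is tall, this is an instance of case~(4), which I would treat as the concrete model case. For $\ED$ itself, note that $\Delta\in\ED^+$ and $\ED\restriction\Delta=\ED_\fin$; since $\bnumber_{R}(\I)\le\bnumber_{R}(\I\restriction X)$ for every $X\in\I^+$ \cite[Remark~2.4(2)]{MR3247032}, a bound $\bnumber_{R}(\ED_\fin)\le\omega_1$ immediately gives $\bnumber_{R}(\ED)\le\omega_1$. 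Thus once cases~(3) and~(4) are established, cases~(1) and~(2) follow.

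For the fragmented cases I would fix the block decomposition $\omega=\bigsqcup_i a_i$ with submeasures $\phi_i$ defining $\I$ and first secure an infinite partition $\cA=\{A_k:k<\omega\}$ of the underlying set into $\I$-positive sets; this is available because fragmented ideals are $F_\sigma$, hence Borel, hence have the hereditary Baire property, so \cite[Proposition~2.5]{FKL} applies. I would then build, by transfinite recursion of length $\omega_1$, an $\I$-orthogonal family $\{B_\xi:\xi<\omega_1\}\subseteq\I^+$ with each $B_\xi$ orthogonal to $\cA$ (that is, $B_\xi\cap A_k\in\I$ for all $k$) and increasing modulo $\I$ (that is, $B_\eta\setminus B_\xi\in\I$ for $\eta<\xi$), in the style of a Hausdorff gap. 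At each stage the hypotheses on the submeasures are exactly what keep the recursion alive: in case~(4) the $\phi_i$ are measures and $\I$ is tall on a positive set, so on cofinally many blocks there is genuinely additive free mass that lets one choose $B_\xi$ simultaneously $\I$-positive, orthogonal to $\cA$, and almost-containing every earlier $B_\eta$; in case~(3) the uniformity of the $\phi_i$ together with the failure of gradual fragmentation supplies, on a suitable subsequence of blocks, the rapid submeasure growth needed for the same choices. For the prototype $\ED_\fin$ this is the explicit combinatorics of finite sets whose counting-measure sizes tend to infinity.

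The main obstacle is the non-separability of the family, i.e.\ showing that no single $C$ with $C\cap A_k\in\I$ for all $k$ can satisfy $B_\xi\setminus C\in\I$ for every $\xi<\omega_1$. As in Hausdorff's construction, this must be read off from the \emph{coherence} of the increasing family rather than by enumerating the continuum-many candidate separators (only $\omega_1$ stages are available): the recursion is arranged so that any hypothetical $\I$-pseudo-union $C$ of $\{B_\xi:\xi<\omega_1\}$ would be forced to carry positive submeasure transversally to $\cA$, hence to satisfy $C\cap A_k\notin\I$ for some $k$, a contradiction. Making the coherence requirement (increasing modulo $\I$) compatible with orthogonality to $\cA$ and with positivity through all $\omega_1$ stages is precisely where a naive tower fails -- it would need $\bnumber$ many sets -- and where the measure or uniform-submeasure structure must be exploited. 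Once the gap is produced we obtain $\bnumber_{R}(\I)\le\omega_1$, and combining this with Theorem~\ref{thm:b-vs-b-inequality} and the lower bound of the first paragraph yields $\bnumber(\I)=\omega_1$ in all four cases.
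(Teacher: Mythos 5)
Your overall architecture matches the paper's: the lower bound $\bnumber(\I)\geq\omega_1$ holds for every ideal (your direct $\sigma$-directedness argument for $(\cD_\I,\geq_\I)$ is correct and is a legitimate substitute for the paper's citation of \cite[Theorem~4.2]{MR4472525}), the upper bound is reduced to $\bnumber_{R}(\I)\leq\omega_1$ via Theorem~\ref{thm:b-vs-b-inequality}, and item (1) is deduced from item (2) by restricting to the triangle $\{(n,k):k\leq n\}$ (the paper does this restriction at the level of $\bnumber(\I)$ rather than $\bnumber_R(\I)$, but both work). The divergence, and the genuine gap, is in how $\bnumber_{R}(\I)\leq\omega_1$ is established for $\ED_{\fin}$ and the fragmented ideals. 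The paper simply cites \cite[Theorems~3.1, 3.4 and~3.6]{MR3247032}, where the $(\omega,\omega_1)$-Rothberger gaps are actually constructed; you instead undertake to build them yourself by an $\omega_1$-recursion, and that construction is never carried out. You list the properties the family $\{B_\xi:\xi<\omega_1\}$ should have (positive, orthogonal to $\cA$, increasing modulo $\I$) and you correctly flag the crux --- that a plain increasing tower orthogonal to $\cA$ gives no reason why a separator cannot exist, and that only $\omega_1$ stages are available so the separators cannot be diagonalized against directly --- but your resolution is purely aspirational: ``the recursion is arranged so that any hypothetical $\I$-pseudo-union $C$ would be forced to carry positive submeasure transversally to $\cA$.'' No coherence condition is stated, no inductive invariant is specified, and no use is made of the actual definitions of uniformity of the $\phi_i$, of gradual fragmentation, or of somewhere-tallness beyond naming them. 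In Hausdorff's classical construction it is a concrete extra condition on the pairs (not mere $\subseteq^*$-increasingness) that precludes interpolation, and an analogue of it, verified to be maintainable through all $\omega_1$ stages using the submeasure structure, is exactly the missing content. As written, the hardest and most ideal-specific step of the proof is replaced by a description of what it should achieve.

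A secondary caveat: your claim that $\ED_{\fin}$, viewed as the fragmented ideal of columns of the triangle with counting measures, is an instance of case~(4) is plausible but depends on the precise meaning of ``somewhere tall'' in \cite{MR3247032}; the paper avoids this by invoking Theorem~3.1 of that reference for $\ED_{\fin}$ separately from Theorems~3.4 and~3.6 for the fragmented cases. If you intend to prove rather than cite the gap constructions, this identification also needs to be checked rather than asserted.
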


\begin{proof}
Since $\bnumber(\I)\geq \omega_1$ for each $\I$ \cite[Theorem~4.2]{MR4472525}, we only need to show that $\bnumber(\I)\leq \omega_1$ in the case of these ideals. Items (2), (3) and (4) follow from Theorem~\ref{thm:b-vs-b-inequality} and the fact that  $\bnumber_{R}(\I)=\omega_1$ in these cases \cite[Theorem~3.1, 3.4 and 3.6]{MR3247032}. Item (1) follows from 
item (2), the equality  $\ED_{\fin} = \ED\restriction\{(n,k)\in \omega^2: k\leq n\}$
and the inequality $\bnumber(\I)\leq \bnumber(\I\restriction X)$, which holds for every $X\notin \I$ \cite[Theorem~5.1]{MR4472525}.
\end{proof}

\begin{proposition}
\label{prop:b-for-products}
Let  $\I$ and $\J$ be ideals on $\omega$.
\begin{enumerate}
    \item 
$\bnumber_{R}(\I\otimes \J)\leq\bnumber_R(\I)$. 
\item 
If $\bnumber_{R}(\I)  = \bnumber(\I)$, then 
$\bnumber_{R}(\I\otimes \J) = \bnumber(\I\otimes \J)=\bnumber(\I)$.
\end{enumerate}
\end{proposition}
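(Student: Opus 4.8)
The plan is to prove (1) by lifting a Rothberger $\I$-gap to a Rothberger $(\I\otimes\J)$-gap along the vertical inclusion $S\mapsto S\times\omega$, and then to deduce (2) by chaining inequalities, the only genuinely new ingredient being the unconditional bound $\bnumber(\I)\leq\bnumber(\I\otimes\J)$. For (1), the key observation is that $S\times\omega\in\I\otimes\J$ if and only if $S\in\I$: the vertical section $(S\times\omega)_{(x)}$ equals $\omega\notin\J$ when $x\in S$ and $\emptyset$ otherwise, so $\{x:(S\times\omega)_{(x)}\notin\J\}=S$. If $\bnumber_{R}(\I)=\cc^+$ there is nothing to prove, so assume an $\I$-Rothberger gap witnesses $\bnumber_{R}(\I)=\kappa$; by \cite[Lemma~2.2]{MR3247032} I may take it to be a pair $(\cA,\cB)$ with $\cA=\{A_n:n\in\omega\}$ a partition of $\omega$ into $\I$-positive sets and $|\cB|=\kappa$. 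Setting $\cA'=\{A_n\times\omega:n\in\omega\}$ and $\cB'=\{B\times\omega:B\in\cB\}$, the observation makes $\cA'$ a partition of $\omega\times\omega$ into $(\I\otimes\J)$-positive sets, keeps every member of $\cB'$ in $(\I\otimes\J)^+$, and yields $(\I\otimes\J)$-orthogonality from $(A_n\times\omega)\cap(B\times\omega)=(A_n\cap B)\times\omega$ together with $A_n\cap B\in\I$.

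The heart of (1) is that $(\cA',\cB')$ admits no $(\I\otimes\J)$-separator. Suppose $C\subseteq\omega\times\omega$ were one, and put $D=\{x:C_{(x)}\notin\J\}$ and $E=\{x:\omega\setminus C_{(x)}\in\J\}$. Computing the vertical sections of $(A_n\times\omega)\cap C$ and of $(B\times\omega)\setminus C$ translates the separation conditions into $A_n\cap D\in\I$ for all $n$ and $B\setminus E\in\I$ for all $B\in\cB$. Since $\omega\notin\J$ forces $E\subseteq D$, we get $B\setminus D\subseteq B\setminus E\in\I$, so $D$ would $\I$-separate $(\cA,\cB)$, a contradiction. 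Hence $(\cA',\cB')$ is an $(\I\otimes\J)$-$(\omega,\kappa)$-gap and $\bnumber_{R}(\I\otimes\J)\leq\kappa=\bnumber_{R}(\I)$.

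For (2), combining Theorem~\ref{thm:b-vs-b-inequality} with part (1) gives $\bnumber(\I\otimes\J)\leq\bnumber_{R}(\I\otimes\J)\leq\bnumber_{R}(\I)$, so under the hypothesis $\bnumber_{R}(\I)=\bnumber(\I)$ it suffices to prove $\bnumber(\I)\leq\bnumber(\I\otimes\J)$; then all four quantities are squeezed together. I would establish this reverse bound through Theorem~\ref{thm:b-characterization}. Fix a witnessing family $\cE'\subseteq\widehat{\cP}_{\I\otimes\J}$ with $|\cE'|=\bnumber(\I\otimes\J)$. Given any $\langle A_n\rangle\in\cP_\I$, lift it to $\langle A_n\times\omega\rangle\in\cP_{\I\otimes\J}$ and extract $\langle E'_n\rangle\in\cE'$ with $\bigcup_n\bigl((A_n\times\omega)\cap\bigcup_{i\leq n}E'_i\bigr)\notin\I\otimes\J$. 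To push this back down, set $P_n=\{x:(E'_n)_{(x)}\notin\J\}\in\I$ and define $E_n=P_n\setminus\bigcup_{j<n}P_j$. Because $\langle A_n\rangle$ partitions $\omega$, the failure above says exactly that $\{x:\bigcup_{i\leq n(x)}(E'_i)_{(x)}\notin\J\}\notin\I$, where $n(x)$ is the index with $x\in A_{n(x)}$; since a finite union of $\J$-sets is a $\J$-set, this set is contained in $\{x:\exists i\leq n(x)\ x\in P_i\}$, which is therefore $\I$-positive, and as $\bigcup_{i\leq n}E_i=\bigcup_{i\leq n}P_i$ this is precisely $\bigcup_n\bigl(A_n\cap\bigcup_{i\leq n}E_i\bigr)\notin\I$. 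The family $\cE=\{\langle E_n\rangle:\langle E'_n\rangle\in\cE'\}\subseteq\widehat{\cP}_\I$ then witnesses the characterization of $\bnumber(\I)$ and has size $\leq|\cE'|$, giving $\bnumber(\I)\leq\bnumber(\I\otimes\J)$.

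The main obstacle is this last construction: the sectionwise-positive sets $P_n$ need not be pairwise disjoint, since disjoint sections $(E'_i)_{(x)}$ and $(E'_j)_{(x)}$ can both be $\J$-positive, so the raw sequence $\langle P_n\rangle$ need not belong to $\widehat{\cP}_\I$. The disjointification $E_n=P_n\setminus\bigcup_{j<n}P_j$ repairs disjointness while leaving every partial union $\bigcup_{i\leq n}E_i=\bigcup_{i\leq n}P_i$ unchanged, which is exactly the invariant the characterization depends on and what makes the transfer succeed.
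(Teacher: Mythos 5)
Your proposal is correct. Part (1) is essentially the paper's argument: the same lift $S\mapsto S\times\omega$, the same orthogonality computation, and the same projection $D=\{x:C_{(x)}\notin\J\}$ of a putative separator; your intermediate set $E=\{x:\omega\setminus C_{(x)}\in\J\}$ and the observation $E\subseteq D$ (from $\omega\notin\J$) just make explicit a step the paper performs in one line via $B\setminus D\subseteq\{x:((B\times\omega)\setminus C)_{(x)}\notin\J\}$. Part (2) is where you diverge: the paper simply cites the known equality $\bnumber(\I)=\bnumber(\I\otimes\J)$ \cite[Theorem 5.13]{MR4472525} and closes the chain of inequalities, whereas you observe that only the inequality $\bnumber(\I)\leq\bnumber(\I\otimes\J)$ is needed and prove it from scratch via Theorem~\ref{thm:b-characterization}, pushing a witnessing family down by taking sectionwise $\J$-positivity sets $P_n$ and disjointifying. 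That argument is sound --- the key invariant $\bigcup_{i\leq n}E_i=\bigcup_{i\leq n}P_i$ is exactly what the characterization consumes, and the containment of $\{x:\bigcup_{i\leq n(x)}(E'_i)_{(x)}\notin\J\}$ in $\bigcup_n\bigl(A_n\cap\bigcup_{i\leq n}E_i\bigr)$ is correct because a finite union of $\J$-sets is in $\J$. What your route buys is self-containedness (in effect you reprove half of the cited theorem); what it costs is length, and it silently drops the other half of that citation ($\bnumber(\I\otimes\J)\leq\bnumber(\I)$), which, as you correctly note, is not needed once the squeeze through $\bnumber_R$ is available.
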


\begin{proof}
(1)
If $\bnumber_{R}(\I)=\continuum^+$, there is nothing to show, so assume that $\bnumber_{R}(\I)\leq \continuum$ and take an $\I$-$(\omega,\bnumber_{R}(\I))$-gap $(\cA,\cB)$.
		Define $\cC = \{A\times\omega: A\in \cA\}$ and $\cD = \{B\times\omega: B\in \cB\}$.
	Once we show that $(\cC,\cD)$ is an $\I\otimes\J$-$(\omega,\bnumber_{R}(\I))$-gap, the proof will be finished.
	Obviously, $|\cC|=\omega$ and $|\cD|=\bnumber_{R}(\I)$. Moreover, since $A\cap B\in \I$, $(A\times \omega)\cap (B\times \omega) = (A\cap B)\times\omega \in \I\otimes\J$. Thus, $(\cC,\cD)$ is $\I\otimes\J$-orthogonal. Suppose that there is a set $C\subseteq \omega\times\omega$ which $\I\otimes\J$-separates the pair $(\cC,\cD)$. Denote $D = \{n\in \omega: C_{(n)}\notin \J\}$, where $C_{(n)} = \{k\in \omega: (n,k)\in C\}$. If we show that the pair $(\cA,\cB)$ is $\I$-separated by the set
    $D$, we will obtain a contradiction.
Take a set $A\in \cA$.
Since $(A\times \omega)\cap C\in \I\otimes\J$, 
$A\cap D \subseteq  \{n\in\omega: ((A\times \omega)\cap C)_{(n)}\notin\J\}\in \I$. Now take a set $B\in \cB$.
Since $(B\times \omega)\setminus  C\in \I\otimes\J$, 
$B\setminus D \subseteq  \{n\in\omega: ((B\times \omega)\setminus  C)_{(n)}\notin\J\}\in \I$. Thus $D$ separates the pair $(\cA,\cB)$.

(2)
It is known that 
$\bnumber(\I) = \bnumber(\I\otimes \J)$ \cite[Theorem 5.13]{MR4472525}, so using item (1) and Theorem~\ref{thm:b-vs-b-inequality}, we obtain 
	$$\bnumber_{R}(\I\otimes \J) \leq  \bnumber_{R}(\I) =\bnumber(\I) = \bnumber(\I\otimes \J) \leq \bnumber_{R}(\I\otimes\J).$$
\end{proof}

For any positive  $n\in \omega$, we define by recursion the ideals $\FIN^n$ by  
$$\FIN^1=\Fin \quad  \text{ and }\quad \FIN^{n+1} = \FIN\otimes \FIN^n.$$

\begin{corollary}
	$\bnumber_{R}(\fin^{n+1}) = \bnumber$ for each $n\in \omega$.
	In particular, $\bnumber_{R}(\fin\otimes\fin) = \bnumber$.
\end{corollary}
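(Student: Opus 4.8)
The plan is to deduce the corollary directly from Proposition~\ref{prop:b-for-products}(2), taking $\I=\Fin$ and $\J=\FIN^n$, so that no induction on $n$ is needed. The point is that the second clause of that proposition transfers the coincidence $\bnumber_R=\bnumber$ from a factor $\I$ to an arbitrary Fubini product $\I\otimes\J$, and for the factor $\Fin$ this coincidence is already available from the introduction.

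First I would record the two facts noted in the introduction, namely that $\bnumber=\bnumber_R(\Fin)$ and $\bnumber=\bnumber(\Fin)$; together these give $\bnumber_R(\Fin)=\bnumber(\Fin)=\bnumber$. This is precisely the hypothesis $\bnumber_R(\I)=\bnumber(\I)$ of Proposition~\ref{prop:b-for-products}(2) in the case $\I=\Fin$. Since $\J=\FIN^n$ is an ideal on $\omega$ for every $n\geq 1$ (by the recursive definition $\FIN^{n+1}=\Fin\otimes\FIN^n$), the proposition applies verbatim and yields
\[
\bnumber_R(\Fin\otimes\FIN^n)=\bnumber(\Fin\otimes\FIN^n)=\bnumber(\Fin)=\bnumber.
\]
It then remains only to observe that $\Fin\otimes\FIN^n=\FIN^{n+1}$ by definition, which gives $\bnumber_R(\FIN^{n+1})=\bnumber$ for each $n\in\omega$. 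The stated special case $\bnumber_R(\Fin\otimes\Fin)=\bnumber$ is just $n=1$, where $\FIN^2=\Fin\otimes\Fin$.

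Since all the real work is already carried out in Proposition~\ref{prop:b-for-products}, there is no genuine obstacle here; the only thing to be careful about is matching the roles of $\I$ and $\J$ correctly, namely that the coincidence hypothesis is imposed on the first factor $\Fin$ and not on $\FIN^n$, and that the recursive definition of $\FIN^{n+1}$ indeed places $\Fin$ in that first coordinate. In particular, I would emphasise that no induction is required and that the internal structure of the higher products $\FIN^n$ is irrelevant, as they enter only through the free slot $\J$.
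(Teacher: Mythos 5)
Your proposal is correct and is essentially identical to the paper's own proof: the paper likewise invokes $\bnumber_R(\fin)=\bnumber(\fin)=\bnumber$ together with $\fin^{n+1}=\fin\otimes\fin^{n}$ and applies Proposition~\ref{prop:b-for-products}(2), with no induction. Your remark that the coincidence hypothesis must sit on the first factor $\Fin$ is exactly the right point of care.
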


\begin{proof}
		Since $\bnumber_{R}(\fin)=\bb(\fin)=\bnumber$ and $\fin^{n+1} = \fin\otimes \fin^{n}$, Proposition~\ref{prop:b-for-products}
finishes the proof.
\end{proof}


\section{Characterization of the Rothberger number}

The  characterization of the number $\bnumber(\I)$ from Theorem~\ref{thm:b-characterization} can be rephrased  in the following manner:
\begin{equation*}
\begin{split}
\bnumber(\I) = 
\min 
\left\{ 
\kappa: 
 \right.
&
\exists_{\{E_n^\alpha: n<\omega \alpha<\kappa\}\subseteq \I} \,
\left[ 
\left( 
\forall_{\alpha<\kappa}\,\forall_{n\in\omega}\, E^\alpha_n\cap \bigcup_{m\neq n} E^\alpha_m=\emptyset
\right) 
\land 
\right. 
\\&
\left.  
\left. 
\left(
\forall_{\{A_n:n<\omega\}\in\cP_\I} \, \exists_{\alpha<\kappa}\, \bigcup_{n\in\omega}
\left(
A_n\cap \bigcup_{i\leq n}E^\alpha_i
\right) 
\notin\I
\right)
\right]
\right\}.
\end{split}
\end{equation*}
In Theorem~\ref{thm:bR-characterization}, we show that the Rothberger number $\bnumber_{R}(\I)$ can be characterized in a similar manner as  the above  characterization of $\bb(\I)$.
It is worth to stress that the \emph{only} difference between the above characterization of
	$\bnumber(\I)$ and the below characterization of $\bnumber_{R}(\I)$ 
	lies in the fact that 
	in the former we only require 
	$$E^\alpha_n\cap \bigcup_{m\neq n} E^\alpha_m=\emptyset$$ 
	for each $\alpha$ separately,
	whereas in the latter we will require disjointness for all $\beta$ simultaneously, i.e.
	$$E^\alpha_n\cap \bigcup_{\beta<\kappa} \left(\bigcup_{m\neq n} E^\beta_m\right)=\emptyset.$$ 

\begin{theorem}
	\label{thm:bR-characterization}
	For any ideal $\I$, 
\begin{equation*}
	\begin{split}
		\bnumber_R(\I) = 
		\min \left\{\kappa:
        \right.
        &
        \left.
\exists_{\{E_n^\alpha: n<\omega,\alpha<\kappa\}\subseteq \I}\,
\left[\left( \forall_{\alpha<\kappa}\, \forall_{n\in\omega}\,E^\alpha_n\cap \bigcup_{\beta<\kappa} \left(\bigcup_{m\neq n} E^\beta_m\right)=\emptyset\right) \land 
		\right.\right.
		\\&
		\left. \left.\left(\forall_{\{A_n:n<\omega\}\in\cP_\I}\, \exists_{\alpha<\kappa}\, \bigcup_{n\in\omega}\left(A_n\cap \bigcup_{i\leq n}E^\alpha_i\right)\notin\I\right)\right]\right\}.
	\end{split}
\end{equation*}
\end{theorem}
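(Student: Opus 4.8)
The plan is to prove the characterization by establishing the two inequalities separately, in each case translating between an $\I$-Rothberger gap $(\cA,\cB)$ (with $\cA$ a partition) and a system $\{E_n^\alpha : n<\omega,\ \alpha<\kappa\}$ of the required form. The guiding intuition, made explicit in the remark preceding the statement, is that the "simultaneous disjointness" condition on the $E_n^\alpha$ is precisely what encodes $\I$-orthogonality of a \emph{single} $\cB$-family against a \emph{single} partition $\cA$, rather than (as in the $\bnumber(\I)$ case) against a family of partitions indexed by $\alpha$.

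For the inequality $\bnumber_R(\I)\leq$ (the minimum on the right), I would start with a witnessing system $\{E_n^\alpha\}$ of size $\kappa$. The simultaneous disjointness $E_n^\alpha\cap\bigcup_{\beta<\kappa}\bigcup_{m\neq n}E_m^\beta=\emptyset$ means that for a \emph{fixed} index $n$, the sets $E_n^\beta$ across all $\beta$ are disjoint from every $E_m^\gamma$ with $m\neq n$; so I would assemble, for each $\alpha$, the set $B_\alpha=\bigcup_{n<\omega}E_n^\alpha$ and take $\cB=\{B_\alpha:\alpha<\kappa\}$. The plan is to produce a partition $\cA=\{A_n:n<\omega\}$ of $\omega$ into $\I$-positive sets so that $(\cA,\cB)$ becomes an $\I$-gap; the natural candidate is to let $A_n$ collect the "$n$-th coordinate" material, exploiting that each $B_\alpha\cap A_n$ reduces to $E_n^\alpha\in\I$. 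The second clause of the characterization — that every partition in $\cP_\I$ is "caught" by some $\alpha$ — is exactly what should prevent $(\cA,\cB)$ from being $\I$-separated, via the computation already rehearsed in the proof of Theorem~\ref{thm:b-vs-b-inequality} relating the separating set $C$ to the sets $\bigcup_n(A_n\cap\bigcup_{i\leq n}E_i^\alpha)$.

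For the reverse inequality I would run the same correspondence backwards: given an $\I$-$(\omega,\bnumber_R(\I))$-gap, invoke \cite[Lemma~2.2]{MR3247032} to assume $\cA=\{A_n:n<\omega\}$ is a partition of $\omega$ into $\I$-positive sets, enumerate $\cB=\{B_\alpha:\alpha<\kappa\}$ with $\kappa=\bnumber_R(\I)$, and define $E_n^\alpha=A_n\cap B_\alpha$. Since $\cA$ is a partition, for fixed $\alpha$ the sets $E_n^\alpha$ are pairwise disjoint in $n$; and since $E_n^\alpha\subseteq A_n$ while $E_m^\beta\subseteq A_m$ with the $A_n$ pairwise disjoint, the simultaneous-disjointness clause holds automatically. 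Orthogonality gives $E_n^\alpha=A_n\cap B_\alpha\in\I$. The remaining task is to verify the second clause: I would take an arbitrary $\{A'_n\}\in\cP_\I$, use it to build a candidate separating set $C$ (again following the explicit $C=\bigcup_n(A'_n\cap\bigcup_{i<n}C_i)$-style bookkeeping from Theorem~\ref{thm:b-vs-b-inequality}), and argue that if \emph{no} $\alpha$ satisfied $\bigcup_n(A'_n\cap\bigcup_{i\leq n}E_i^\alpha)\notin\I$ then $C$ would $\I$-separate $(\cA,\cB)$, contradicting that it is a gap.

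The main obstacle I anticipate is the bookkeeping in this translation, specifically getting the index offsets right ($\bigcup_{i\leq n}$ versus $\bigcup_{i<n}$) so that the telescoping/reindexing identities — like $A_k\cap\bigcup_{n}(A_n\cap\bigcup_{i<n}C_i)=A_k\cap\bigcup_{i<k}C_i$ used in Theorem~\ref{thm:b-vs-b-inequality} — line up to show that a failure of the combinatorial clause yields an honest $\I$-separator, and conversely. In particular, in the $\leq$ direction I must confirm that the manufactured partition $\cA$ genuinely consists of $\I$-positive sets (which should follow from $\bigcup_n E_n^\alpha$ being forced to meet each $A_n$ appropriately, or by an auxiliary choice of $A_n$'s) and that $\cA$ and $\cB$ are $\I$-orthogonal; these are where the simultaneous-disjointness hypothesis does its real work, and I would check carefully that ordinary per-$\alpha$ disjointness would \emph{not} suffice, mirroring the contrast with the $\bnumber(\I)$ characterization.
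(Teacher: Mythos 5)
Your plan is correct and follows essentially the same route as the paper's own proof: the correspondence $E_n^\alpha = A_n\cap B_\alpha$ in the direction from a gap to a witnessing system, the assembly $A_n=\bigcup_{\alpha}E_n^\alpha$ and $B_\alpha=\bigcup_{n}E_n^\alpha$ in the other, and the translation between separators $C$ and partitions in $\cP_\I$ via the interleaving $\bigcup_{n}\left(A_n\cap\bigcup_{i<n}D_i\right)$ (and its inverse, padded with the leftover points to get a genuine member of $\cP_\I$). The bookkeeping you flag --- the index offset, the padding, and the $\I$-positivity of the manufactured $A_n$ --- is exactly where the paper's argument spends its effort, and it goes through.
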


\begin{proof}
	$(\geq)$ 
	Let $(\cA,\cB)$ be an $\I$-gap of type $(\omega,\bnumber_{R}(\I))$.
	Without loss of generality, we can assume that $\cA$ is a partition of $\omega$.
	Let $\cA = \{A_n:n\in\omega\}$ and $\cB = \{B_\alpha:\alpha<\bnumber_{R}(\I)\}$.
	We define 
	$$E_n^\alpha = A_n\cap B_\alpha\in\I$$
	for each $n<\omega$ and $\alpha<\bnumber_{R}(\I)$.
	Then $E_n^\alpha\cap E_m^\beta=\emptyset$ for each $n\neq m$ and arbitrary $\alpha,\beta<\bnumber_{R}(\I)$.	
	Take any $\{D_n:n<\omega\}\in\cP_\I$. To finish the proof, we have to show that  
    $$\bigcup_{n\in\omega}\left(D_n\cap \bigcup_{i\leq n}E^\alpha_i\right)\notin\I$$ 
    for some $\alpha<\bnumber_{R}(\I)$.
	Let us define 
	$$C =\bigcup_{n\in\omega} \left(A_n\cap \bigcup_{i< n} D_i\right).$$ 
	Since $C\cap A_n \subseteq \bigcup_{i< n} D_i \in \I$ for each $n$ and $(\cA,\cB)$ is an $\I$-gap, there exists $\alpha<\bnumber_{R}(\I)$ with 
	$B_\alpha\setminus C\notin \I$.
	We claim that 
    $$B_\alpha\setminus C\subseteq \bigcup_{n\in\omega}\left(D_n\cap \bigcup_{i\leq n}E^\alpha_i\right).$$
	Indeed, take $x\in B_\alpha\setminus C$ and let $n\in\omega$ be such that $x\in A_n$. Since $x\notin C$, $x\notin \bigcup_{i< n} D_i$. Then there is $k\geq n$ such that $x\in D_k$.
	Consequently, 
	$$x\in D_k\cap A_n \cap  B_\alpha= D_k\cap E^\alpha_n \subseteq D_k\cap \bigcup_{i\leq k} E^\alpha_i  \subseteq \bigcup_{n\in\omega}\left(D_n\cap \bigcup_{i\leq n}E^\alpha_i\right).$$

	$(\leq)$
	Let $\{E_n^\alpha : n<\omega, \alpha<\kappa\} \subseteq \I$
	be such that 	
	$E_n^\alpha \cap E_m^\beta=\emptyset$ for each $\alpha,\beta<\kappa$ and $n\neq m$
	and for any $\{D_n:n<\omega\} \in \cP_\I$ there is $\alpha<\kappa$ with 
    $$\bigcup_{n\in\omega}\left(D_n\cap \bigcup_{i\leq n}E^\alpha_i\right)\notin\I.$$ 
	Let $\cA=\{A_n:n\in\omega\}$ and $\cB=\{B_\alpha:\alpha<\kappa\}$, where  
	$$A_n =\bigcup_{\alpha<\kappa} E^\alpha_n
	\text{\ \ and\ \ } B_\alpha = \bigcup_{n<\omega} E^\alpha_n$$ 
	for each $n\in\omega$ and $\alpha<\kappa$.
If we show that $(\cA, \cB)$ is an $\I$-$(\omega,\lambda)$-gap  and  $\lambda\leq \kappa$, the proof will be finished.	
	Since $A_n\cap B_\alpha = E^\alpha_n\in \I$, the families $\cA$ and $\cB$ are $\I$-orthogonal.
	Obviously $|\cB|\leq \kappa$. We claim that 
	$|\cA|=\omega$. Indeed, otherwise $A_n=\emptyset$ for all but finitely many $n$, so there is $k$ such that $E^\alpha_n=\emptyset$ for each $\alpha$ and $n\geq k$. Consequently, for any $\{D_n :n<\omega\} \in\cP_\I$ and any $\alpha<\kappa$ we have 
    $$\bigcup_{n\in\omega}\left(D_n\cap \bigcup_{i\leq n} E^\alpha_i\right)\subseteq \bigcup_{n<\omega}E^\alpha_n \in\I,$$ a contradiction.	
	Finally, we show that no $C$ can $\I$-separate $(\cA,\cB)$.	
	Take any $C\subseteq\omega$ such that $A_n\cap C\in \I$ for any $n$.
	We need to find $\alpha<\kappa$ such that $B_\alpha\setminus C\notin\I$.
	We define $D'_n = C\cap A_{n+1}$ for each $n\in \omega$. Let $\omega\setminus \bigcup_{n\in \omega}D'_n = \{a_n:n\in\omega\}$. Now we define $D_n = D'_n\cup\{a_n\}$ for each $n$. Then $\{D_n:n\in \omega\}\in \cP_\I$, so there is $\alpha<\kappa$ 
	with 
    $$\bigcup_{n\in\omega}\left(D_n\cap \bigcup_{i\leq n}E^\alpha_i\right)\notin\I.$$ 
Once we show  that 
$$\bigcup_{n\in\omega}\left(D_n\cap \bigcup_{i\leq n}E^\alpha_i\right) \subseteq (B_\alpha\setminus C) \cup (A_0\cap C),$$    
then 	$B_\alpha\setminus C\notin\I$,
    because
 $A_0\cap C\in \I$.
Since
$$
	\bigcup_{n\in\omega}\left(D_n\cap \bigcup_{i\leq n}E^\alpha_i\right) 
	\subseteq 
	\bigcup_{n\in\omega}\bigcup_{i\leq n}E^\alpha_i
	= B_\alpha,$$
	we have
\begin{equation*}
	\begin{split}
    \bigcup_{n\in\omega}\left(D_n\cap \bigcup_{i\leq n}E^\alpha_i\right) 
 & = 
    \bigcup_{n\in\omega}\left(D_n\cap \bigcup_{i\leq n}E^\alpha_i\right)
    \setminus C 
	\\&
     \cup 
    \bigcup_{n\in\omega}\left(D_n\cap \bigcup_{i\leq n}E^\alpha_i\right)
    \cap  C
	 \\&
    \subseteq 
	(B_\alpha\setminus C)
	\cup
    \left(\bigcup_{n\in\omega}\left(D_n\cap \bigcup_{i\leq n}E^\alpha_i\right)\cap  C\right) 
    .
\end{split}
\end{equation*}
Now we show that 
$$C\cap \bigcup_{n\in\omega}\left(D_n\cap \bigcup_{i\leq n}E^\alpha_i\right)  \subseteq A_0\cap C.$$
	Indeed, 
	using the facts 
	that
    $A_{n+1}\cap \bigcup_{i\leq n}E^\alpha_i=\emptyset$ for each $n$,
    $B_\alpha\cap (\omega\setminus\bigcup_{n\in\omega}A_n)=\emptyset$
    and
$$\{a_n:n\in\omega\} = \left(\omega\setminus \bigcup_{n\in\omega}A_n\right)\cup A_0\cup  \left(\bigcup_{n\geq 1}A_n\setminus C\right),$$
	we obtain 
\begin{equation*}
\begin{split}
\bigcup_{n\in\omega}\left(D_n\cap \bigcup_{i\leq n}E^\alpha_i\right) \cap  C
	& =
\bigcup_{n\in\omega}\left((D'_n\cup \{a_n\})\cap \bigcup_{i\leq n}E^\alpha_i\right) \cap  C
\\&	=
\bigcup_{n\in\omega}\left(((A_{n+1}\cap C) \cup\{a_n\})\cap \bigcup_{i\leq n}E^\alpha_i \right) \cap  C
	\\& =
\bigcup_{n\in\omega}\left(\{a_n\} \cap \bigcup_{i\leq n}E^\alpha_i\right) \cap  C
	\\&
    \subseteq 
	\{a_n:n\in \omega\}\cap B_\alpha \cap C
	\\& =
	\left(\left(\omega\setminus \bigcup_{n\in\omega}A_n\right)\cup A_0\cup  \left(\bigcup_{n\geq 1}A_n\setminus C\right)\right)\cap B_\alpha \cap C 
    \\& = 
    A_0\cap B_\alpha\cap C. 
    	\end{split}
\end{equation*}
\end{proof}


\section{Consistently \texorpdfstring{$\bnumber(\I)$}{b-I} is less than  \texorpdfstring{$\bnumber_{R}(\I)$}{b-R-I}}

We have either $\bnumber_{R}(\I)\leq \bnumber$ or $\bnumber_{R}(\I)=\continuum^+$ for ideals $\I$ for which the value $\bnumber_{R}(\I)$  had been estimated so far in the literature.
Moreover, it is consistent that $\bnumber_{R}(\I)<\bnumber$ for some ideal $\I$  \cite[Theorem~6.1]{MR3247032}.  
In Theorem~\ref{thm:bR-graeter-thatn-b} we show that consistently the value of $\bnumber_{R}(\I)$ can be strictly greater than $\bnumber$ and strictly less than $\continuum^+$. However, this example is cooked up with the aid of maximal ideals, so the following questions remains open.

\begin{question}
Is it consistent that there exists a \emph{Borel} ideal $\I$ such that $\bnumber(\I)<\bnumber_{R}(\I)\leq\cc$?
\end{question}

\begin{lemma}
	\label{lem:b-bR-for-I-times-empty}
	$\bnumber_{R}(\I\otimes\{\emptyset\}) = \bnumber(\I\otimes\{\emptyset\}) = \bnumber(\I)$
    for any ideal $\I$.
\end{lemma}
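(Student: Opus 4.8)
The plan is to prove all three equal by running the cyclic chain
$$\bnumber(\I)\le\bnumber(\I\otimes\{\emptyset\})\le\bnumber_{R}(\I\otimes\{\emptyset\})\le\bnumber(\I),$$
whose middle step is Theorem~\ref{thm:b-vs-b-inequality} applied to the ideal $\I\otimes\{\emptyset\}$. Write $\J=\I\otimes\{\emptyset\}$ and $\pi(A)=\{x:A_{(x)}\neq\emptyset\}$, so that $A\in\J$ iff $\pi(A)\in\I$ and $A\in\J^+$ iff $\pi(A)\in\I^+$. The two nontrivial inequalities will both be obtained by transporting witnessing families between the characterizations of Theorems~\ref{thm:b-characterization} and~\ref{thm:bR-characterization}, the second coordinate of $\omega\times\omega$ providing extra room; since both characterizations are invariant under relabelling the countable underlying set, I apply them to $\J$ with $\omega\times\omega$ in place of $\omega$ and $\cP_{\J},\widehat{\cP}_{\J}$ the corresponding families.

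For $\bnumber_{R}(\J)\le\bnumber(\I)$ I would start from a family $\{\langle E_n^\alpha\rangle_n:\alpha<\kappa\}$ witnessing the characterization of $\bnumber(\I)$ in Theorem~\ref{thm:b-characterization}, with $\kappa=\bnumber(\I)$, and lift it by placing the $n$-th set at height $n$: $\widehat E_n^\alpha=E_n^\alpha\times\{n\}$. Sets at different heights are automatically disjoint, so this family satisfies the \emph{simultaneous} disjointness $\widehat E_n^\alpha\cap\widehat E_m^\beta=\emptyset$ for $n\neq m$ (all $\alpha,\beta$) required by Theorem~\ref{thm:bR-characterization} — exactly the feature distinguishing it from the characterization of $\bnumber(\I)$. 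It then remains to verify the covering clause for an arbitrary $\langle D_k\rangle\in\cP_{\J}$ of $\omega\times\omega$: recording $h(x,j)=k\iff(x,j)\in D_k$, set $P_j=\{x:h(x,j)<j\}\in\I$ and build $\langle A_n\rangle\in\cP_\I$ with $A_n=\big(\bigcup_{j\le n+1}P_j\big)\setminus\big(\bigcup_{j\le n}P_j\big)$, sprinkling the leftover points $\omega\setminus\bigcup_jP_j$ one per block. Feeding $\langle A_n\rangle$ into the $\bnumber(\I)$-witness yields $\alpha$ with $\bigcup_i\big(E_i^\alpha\setminus\bigcup_{n<i}A_n\big)\notin\I$; since $P_i\subseteq\bigcup_{n<i}A_n$ this set lies inside $\bigcup_i\big(E_i^\alpha\setminus P_i\big)$, which a direct computation identifies with the projection of $\bigcup_k\big(D_k\cap\bigcup_{i\le k}\widehat E_i^\alpha\big)$, so the latter is $\J$-positive as required.

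For the reverse inequality $\bnumber(\I)\le\bnumber(\J)$ I would run the construction backwards, starting from a family $\{\langle\widehat E_n^\alpha\rangle_n:\alpha<\lambda\}$ witnessing Theorem~\ref{thm:b-characterization} for $\J$, with $\lambda=\bnumber(\J)$, and projecting: $E_n^\alpha=\pi(\widehat E_n^\alpha)\in\I$. The one subtlety is that projections of disjoint sets need not be disjoint, so $\langle E_n^\alpha\rangle_n$ need not lie in $\widehat{\cP}_\I$; this is repaired by the standard disjointification $(E_n^\alpha)'=E_n^\alpha\setminus\bigcup_{m<n}E_m^\alpha$, which leaves each initial union $\bigcup_{i\le n}(E_i^\alpha)'=\bigcup_{i\le n}E_i^\alpha$ unchanged and hence does not affect the covering clause, whose only dependence on the sets is through these initial unions. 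To verify covering for a given $\langle A_n\rangle\in\cP_\I$ I would feed the cylinder partition $D_k=A_k\times\omega\in\cP_{\J}$ into the $\bnumber(\J)$-witness; projecting $\bigcup_k\big(D_k\cap\bigcup_{i\le k}\widehat E_i^\alpha\big)$ gives exactly $\bigcup_n\big(A_n\cap\bigcup_{i\le n}(E_i^\alpha)'\big)$, which is therefore $\I$-positive for the $\alpha$ the witness supplies. Thus $\{\langle(E_n^\alpha)'\rangle_n:\alpha<\lambda\}$ witnesses $\bnumber(\I)\le\lambda$, closing the chain.

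I expect the main obstacle to be the covering clause of the Rothberger direction, where an arbitrary partition of $\omega\times\omega$ — not merely a cylinder partition — must be collapsed to a single partition of $\omega$; the diagonal device $P_j=\{x:h(x,j)<j\}$ together with the leftover-point bookkeeping is the delicate step. By contrast the reverse direction is routine once one observes that the covering condition sees the witness sets only through their initial unions, so the failure of disjointness under projection is harmless and can be absorbed by disjointifying.
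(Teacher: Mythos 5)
Your proof is correct, and while it shares the paper's overall skeleton --- the cyclic chain $\bnumber(\I)\le\bnumber(\I\otimes\{\emptyset\})\le\bnumber_R(\I\otimes\{\emptyset\})\le\bnumber(\I)$ with Theorem~\ref{thm:b-vs-b-inequality} as the middle link --- both nontrivial legs are established by genuinely different means. For $\bnumber(\I)\le\bnumber(\I\otimes\{\emptyset\})$ the paper simply cites the external equality $\bnumber(\I\otimes\J)=\bnumber(\I)$ from \cite[Theorem~5.13]{MR4472525}, whereas you derive the needed inequality from Theorem~\ref{thm:b-characterization} by projecting and disjointifying a witnessing family; your observation that the covering clause sees the sets only through their initial unions $\bigcup_{i\le n}E_i^\alpha$ is exactly what makes the disjointification harmless, and it renders the lemma self-contained modulo the paper's own characterizations. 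For $\bnumber_R(\I\otimes\{\emptyset\})\le\bnumber(\I)$ the paper builds an explicit gap directly from an unbounded family $\cF\subseteq\cD_\I$ in $(\cD_\I,\geq_\I)$, taking $A_n=\omega\times\{n\}$ and $B_f=\{(n,k):f(n)\le k\}$ and reading a function $g\in\cD_\I$ off the horizontal sections of a candidate separator; you instead lift a $\bnumber(\I)$-witness to a witness for the characterization in Theorem~\ref{thm:bR-characterization} by placing $E_n^\alpha$ at height $n$, so that the simultaneous disjointness --- the one feature distinguishing the two characterizations --- comes for free, at the price of the diagonal bookkeeping with $P_j=\{x:h(x,j)<j\}$ needed to collapse an arbitrary partition of $\omega\times\omega$ to one of $\omega$. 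I checked the delicate points: each $P_j$ lies in $\I$ as a finite union of projections of members of $\I\otimes\{\emptyset\}$, the identity $\bigcup_n\bigl(A_n\cap\bigcup_{i\le n}E_i^\alpha\bigr)=\bigcup_i\bigl(E_i^\alpha\setminus\bigcup_{n<i}A_n\bigr)$ holds because the $A_n$ partition $\omega$, and the projection of $\bigcup_k\bigl(D_k\cap\bigcup_{i\le k}\widehat E_i^\alpha\bigr)$ is indeed $\bigcup_i(E_i^\alpha\setminus P_i)$. The paper's route is shorter and more geometric; yours avoids the external citation and makes explicit that the product with $\{\emptyset\}$ provides precisely the room to upgrade a $\bnumber(\I)$-witness to a simultaneously disjoint one.
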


\begin{proof}
	The second equality is proved in \cite[Theorem~5.13]{MR4472525} and the inequality $\bnumber_{R}(\I\otimes\{\emptyset\}) \geq \bnumber(\I\otimes\{\emptyset\})$ follows from Theorem~\ref{thm:b-vs-b-inequality}.
Below, we show $\bnumber_{R}(\I\otimes\{\emptyset\}) \leq \bnumber(\I)$.

Since, $\bnumber(\I) = \bnumber(\geq_{\I}\cap (\cD_\I\times\cD_\I))$, there exists $\cF\subseteq \cD_\I$ such that $|
\cF|=\bnumber(\I)$ and for each $g\in \cD_\I$ there is $f\in \cF$ such that $f\not\geq_{\I}g$.

Define $A_n=\omega\times\{n\}$ for each $n\in \omega$ and $B_f=\{(n,k)\in \omega^2: f(n)\leq k\}$ for each $f\in \cF$.
Let $\cA = \{A_n:n\in \omega\}$ and $\cB = \{B_f:f\in \cF\}$. Once we show that $(\cA,\cB)$ is an $(\I\otimes\{\emptyset\})$-$(\omega,\bnumber(\I))$-gap, the proof will be finished.

Obviously, $|\cA|=\omega$ and $|\cB|=\bnumber(\I)$. Since $f\in D_\I$, $f^{-1}[\{n\}]\in \I$ for each $n\in \omega$.
Then 
$$A_n\cap B_f \subseteq \bigcup_{k\leq n} f^{-1}[\{k\}]\times \omega \in \I\otimes\{\emptyset\}.$$
Thus,  the pair $(\cA,\cB)$ is $\I\otimes\{\emptyset\}$-orthogonal.

Finally, we have to show that no $C$ can  $\I\otimes\{\emptyset\}$-separate the pair $(\cA,\cB)$.
Let $C\subseteq \omega\times\omega$ be such that $A_n\cap C\in \I\otimes\{\emptyset\}$ for each $n$.
Then all horizontal sections of the set $C$ belongs to $\I$ i.e.~$C^{(n)} = \{k\in \omega: (k,n)\in C\}\in \I$ for each $n$. We define $g:\omega\to\omega$ by 
$$g(i) =
\begin{cases}
i & \text{if }    i\in\omega\setminus\bigcup_{n\in\omega}C^{(n)},\\
n & \text{if } i\in C^{(n)}\setminus \bigcup_{k<n}C^{(k)}.
\end{cases}$$
Since $C^{(n)}\in\I$ for all $n$, $g\in \cD_\I$ and there is $f\in \cF$ such that $f\not\geq_\I g$.
Then $D = \{i\in \omega: f(i)< g(i)\}\notin\I$. Observe that $B_f\setminus C \supseteq \{(i,f(i)): i\in D\}$ as $\{(i,f(i)): i\in D\}\subseteq\{(i,f(i)): i\in \omega\}\subseteq B_f$ and given any $i\in D$ we have two cases:
\begin{itemize}
    \item  $i\in\omega\setminus\bigcup_{n\in\omega}C^{(n)}$, which gives us $C\cap (\{i\}\times\omega)=\emptyset$, so $(i,f(i))\notin C$, or
    \item $i\in C^{(n)}\setminus \bigcup_{k<n}C^{(k)}$ for some $n\in\omega$, which gives us $C\cap (\{i\}\times\omega)\subseteq\{(i,j): j\geq g(i)\}$, so $(i,f(i))\notin C$ by the fact that $i\in D$.
\end{itemize} 
Since $D\notin\I$, we conclude that $B_f\setminus C \supseteq \{(i,f(i)): i\in D\}\notin \I\otimes\{\emptyset\}$, so $C$ does not $\I\otimes\{\emptyset\}$-separate $(\cA,\cB)$.
\end{proof}

\begin{theorem}
	\label{thm:bR-graeter-thatn-b}\
    \begin{enumerate}
        \item If $\dnumber=\continuum$, then there exists an ideal $\I$ with 
$\bnumber(\I) = \bnumber_{R}(\I)=\cf(\dnumber)$. In particular, it is consistent that there exists an ideal $\I$ such that $\bb<\bnumber(\I)=\bnumber_{R}(\I)\leq \dd=\continuum$.
        \item It is consistent that there exists  an ideal $\I$ with $\dd<\bb(\I)=\bb_R(\I)\leq\cc$.
    \end{enumerate}
\end{theorem}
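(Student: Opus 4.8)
The plan is to reduce both parts to the existence of a \emph{maximal} ideal $\cK$ on $\omega$ carrying a prescribed value of $\bnumber(\cK)$, and then to transfer that value to an ideal on which $\bnumber$ and $\bnumber_{R}$ agree by passing to the Fubini product $\cK\otimes\{\emptyset\}$. A maximal ideal is useless on its own, since $\bnumber_{R}(\cK)=\continuum^{+}$; the whole force of the reduction is Lemma~\ref{lem:b-bR-for-I-times-empty}, which gives
\[
\bnumber_{R}(\cK\otimes\{\emptyset\})=\bnumber(\cK\otimes\{\emptyset\})=\bnumber(\cK).
\]
Thus $\I:=\cK\otimes\{\emptyset\}$ is a (non-maximal) ideal that automatically satisfies $\bnumber(\I)=\bnumber_{R}(\I)=\bnumber(\cK)$; in particular $\bnumber_{R}(\I)=\bnumber(\cK)\leq\continuum$, so every value occurring below is at most $\continuum$, as required. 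It therefore remains only to realize the correct value of $\bnumber(\cK)$ together with the correct cardinal arithmetic in each case.

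For item~(1) I would invoke the Canjar-type construction in the realm of maximal ideals (the same circle of ideas cited in the introduction via \cite{Canjar2,Canjar,CanjarPhD}) to produce, under the hypothesis $\dnumber=\continuum$, a maximal ideal $\cK$ with $\bnumber(\cK)=\cf(\dnumber)$. Setting $\I=\cK\otimes\{\emptyset\}$ and applying the displayed equalities yields $\bnumber(\I)=\bnumber_{R}(\I)=\cf(\dnumber)$. For the ``in particular'' clause I would then specialize to a model witnessing $\bb<\cf(\dnumber)$: adding $\aleph_{2}$ Cohen reals over a model of CH gives $\bb=\aleph_{1}$ and $\dnumber=\continuum=\aleph_{2}$ (and $\mathrm{cov}(\cM)=\continuum$, so that the construction of $\cK$ applies), whence $\cf(\dnumber)=\aleph_{2}$ and
\[
\bb=\aleph_{1}<\aleph_{2}=\cf(\dnumber)=\bnumber(\I)=\bnumber_{R}(\I)\leq\dd=\continuum.
\]

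For item~(2) I would work in a model in which $\dnumber<\continuum$ and, by the Canjar consistency result recalled in the introduction, in which there is a maximal ideal $\cK$ with $\bnumber(\cK)=\kappa$ for some regular cardinal $\kappa$ chosen in the interval $(\dnumber,\continuum]$ (for definiteness $\dnumber=\aleph_{1}$ and $\kappa=\continuum=\aleph_{2}$). With $\I=\cK\otimes\{\emptyset\}$, Lemma~\ref{lem:b-bR-for-I-times-empty} again gives
\[
\dnumber<\kappa=\bnumber(\I)=\bnumber_{R}(\I)\leq\continuum,
\]
which is exactly the asserted configuration $\dd<\bnumber(\I)=\bnumber_{R}(\I)\leq\continuum$. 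Note that such an $\I$ cannot be a P-ideal, in accordance with the bound $\bnumber(\J)\leq\dnumber$ valid for every P-ideal $\J$ recalled in the introduction.

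The remaining verifications are routine: $\cK\otimes\{\emptyset\}$ is a proper ideal directly from its definition in the Preliminaries, and the strict inequalities are pure cardinal bookkeeping. The one genuinely hard point is the step I have deferred to the literature, namely producing the maximal ideal $\cK$ with the prescribed value of $\bnumber(\cK)$ --- and, for item~(2), arranging $\bnumber(\cK)>\dnumber$ together with $\dnumber<\continuum$ inside a single model. This is precisely what the Canjar machinery supplies; once $\cK$ is in hand, each part of the theorem is a one-line application of Lemma~\ref{lem:b-bR-for-I-times-empty} to $\cK\otimes\{\emptyset\}$.
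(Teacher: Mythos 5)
Your treatment of item~(1) is essentially the paper's own proof: Canjar's theorem gives, under $\dnumber=\continuum$, a maximal (P-)ideal $\cK$ with $\bnumber(\cK)=\cf(\dnumber)$, and Lemma~\ref{lem:b-bR-for-I-times-empty} applied to $\I=\cK\otimes\{\emptyset\}$ does the rest; your elaboration of the ``in particular'' clause via the Cohen model is also fine. Item~(2), however, has a genuine gap. You derive it from the claim that, consistently, there is a model with $\dnumber<\continuum$ containing a \emph{maximal} ideal $\cK$ with $\bnumber(\cK)>\dnumber$, and you attribute this to ``the Canjar consistency result recalled in the introduction.'' But the Canjar constructions (from \cite{Canjar2,Canjar,CanjarPhD}) all live in models of $\dnumber=\continuum$ --- the Cohen model, or more generally under $\mathrm{cov}(\cM)=\continuum$ --- so the regular cardinals $\kappa\leq\continuum$ they realize as $\bnumber(\cK)$ always satisfy $\kappa\leq\dnumber$. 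Nothing you cite produces a maximal ideal with $\bnumber(\cK)$ strictly above $\dnumber$, and for maximal $\cK$ the number $\bnumber(\cK)$ is the coinitiality of the nonstandard part of the ultrapower $\omega^\omega/\cU$ (for the dual ultrafilter $\cU$), so whether it can exceed $\dnumber$ at all is a nontrivial question that your argument silently assumes has a positive answer.

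The paper avoids this entirely: for item~(2) it does not go through maximal ideals but instead quotes a separate result (\cite[Theorem~4.2]{more-AK}) asserting that in the model obtained by adding $\aleph_2$ random reals to a model of GCH there is an ideal $\J$ (of unspecified, in particular not claimed maximal, type) with $\dd<\bb(\J)\leq\cc$; Lemma~\ref{lem:b-bR-for-I-times-empty} applied to $\J\otimes\{\emptyset\}$ then yields $\dd<\bnumber(\J\otimes\{\emptyset\})=\bnumber_{R}(\J\otimes\{\emptyset\})\leq\cc$. So the overall reduction strategy you chose (transfer $\bnumber$ of a given ideal to an ideal where $\bnumber=\bnumber_R$ via $\otimes\{\emptyset\}$) is exactly right and is the heart of the paper's proof, but the input you feed it in item~(2) is unsupported; you need an ideal, not necessarily maximal, already known to satisfy $\dd<\bnumber(\J)\leq\cc$ in some model, and that requires the external theorem rather than the Canjar machinery.
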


\begin{proof}
	(1)  Canjar \cite{Canjar} proved that, under $\dnumber=\continuum$, there exists a maximal P-ideal $\J$ such that $\bnumber(\J)=\cf(\dnumber)$. Then taking $\I=\J\otimes\{\emptyset\}$ and using Lemma~\ref{lem:b-bR-for-I-times-empty}, we obtain
	$\bb(\I)=\bnumber_{R}(\I)=\cf(\dnumber)$.

    (2) It is  consistent \cite[Theorem 4.2]{more-AK}  (more precisely, in the model obtained by adding $\aleph_2$ random reals to a model of GCH) that there exists an ideal $\J$ with $\dd<\bb(\J)\leq\cc$. Then $\dd<\bnumber_{R}(\J\otimes\{\emptyset\}) = \bnumber(\J\otimes\{\emptyset\}) = \bnumber(\J)\leq\cc$ by Lemma \ref{lem:b-bR-for-I-times-empty}.
\end{proof}

For ideals $\I_i$ on $X_i$ with $i\in \{0,1\}$, we define the following  ideal (called the \emph{direct sum} of $\I_0$ and $\I_1$):\label{def:direct-sum}
$$\I_0\oplus \I_1 = \{A\subseteq (X_0\times\{0\})\cup  (X_1\times\{1\}): \{x\in X_i : (x,i)\in A\} \in\I_i \text{ for $i\in \{0,1\}$}\}.$$

\begin{lemma}
\label{lem:bR-for-direct-sum}
	$\bnumber_{R}(\I\oplus\J) = \min\{\bnumber_{R}(\I),\bnumber_{R}(\J)\}$ for any ideals $\I$ and $\J$.
\end{lemma}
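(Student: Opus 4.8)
The plan is to prove both inequalities by analysing how an $\I\oplus\J$-separator decomposes along the two summands. Write the underlying set of $\I\oplus\J$ as $(X_0\times\{0\})\cup(X_1\times\{1\})$, and for a subset $S$ of it let $S^{(0)}=\{x\in X_0:(x,0)\in S\}$ and $S^{(1)}=\{x\in X_1:(x,1)\in S\}$ be its two parts, so that $S\in\I\oplus\J$ iff $S^{(0)}\in\I$ and $S^{(1)}\in\J$. The basic observation I would record first is that a set $C$ $\I\oplus\J$-separates a pair $(\cC,\cD)$ if and only if $C^{(0)}$ $\I$-separates the left parts $(\{C^{(0)}:C\in\cC\},\{D^{(0)}:D\in\cD\})$ and simultaneously $C^{(1)}$ $\J$-separates the right parts. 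This is immediate from the identities $(C\cap C')^{(i)}=C^{(i)}\cap C'^{(i)}$ and $(D\setminus C)^{(i)}=D^{(i)}\setminus C^{(i)}$ together with the fact that the two parts of $C$ may be chosen independently. Consequently $(\cC,\cD)$ admits no $\I\oplus\J$-separator iff its left parts admit no $\I$-separator \emph{or} its right parts admit no $\J$-separator.

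For the inequality $\bnumber_{R}(\I\oplus\J)\leq\min\{\bnumber_{R}(\I),\bnumber_{R}(\J)\}$ it suffices, by symmetry, to transport an $\I$-gap of least size onto the first summand. Assuming $\bnumber_{R}(\I)\leq\cc$, I would fix an $\I$-$(\omega,\bnumber_{R}(\I))$-gap $(\cA,\cB)$ with $\cA=\{A_n:n<\omega\}$ a partition of $X_0$ into $\I$-positive sets (possible by \cite[Lemma~2.2]{MR3247032}), and place the whole gap on the left copy while absorbing the second summand into a single piece: set $\cA'=\{(A_0\times\{0\})\cup(X_1\times\{1\})\}\cup\{A_n\times\{0\}:n\geq1\}$ and $\cB'=\{B\times\{0\}:B\in\cB\}$. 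Then $\cA'$ is a partition of the sum into $\I\oplus\J$-positive sets with $|\cA'|=\omega$ and $|\cB'|=\bnumber_{R}(\I)$, and orthogonality is routine because every member of $\cB'$ lives on the left copy, so intersecting it with $X_1\times\{1\}$ gives $\emptyset$. Un-separability follows at once from the decomposition observation, since a separator of $(\cA',\cB')$ would have a left part separating $(\cA,\cB)$. When $\min=\cc^+$ the inequality is vacuous.

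For the reverse inequality I would take an $\I\oplus\J$-$(\omega,\kappa)$-gap $(\cC,\cD)$ with $\kappa=\bnumber_{R}(\I\oplus\J)\leq\cc$ and $\cC=\{C_n:n<\omega\}$ a partition, and project it to whichever summand fails to separate; say, by the observation, the left parts admit no $\I$-separator. The main obstacle is that projection need not preserve positivity: some $C_n^{(0)}$ and some $D^{(0)}$ may fall into $\I$. The remedy is to discard all $\I$-small sets on both sides, which changes nothing about separability since an $\I$-small set imposes a trivially satisfied constraint. What must then be checked is that the surviving families remain infinite: if only finitely many $C_n^{(0)}$ were $\I$-positive, the complement of their union would $\I$-separate the left parts, and if only finitely many $D^{(0)}$ were $\I$-positive, the union of those finitely many would do so — either contradicting un-separability. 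Hence $\cA_0=\{C_n^{(0)}:C_n^{(0)}\in\I^+\}$ and $\cB_0=\{D^{(0)}:D\in\cD,\ D^{(0)}\in\I^+\}$ form an $\I$-$(\omega,|\cB_0|)$-gap with $\omega\leq|\cB_0|\leq\kappa$, giving $\bnumber_{R}(\I)\leq\kappa$; symmetrically a right failure gives $\bnumber_{R}(\J)\leq\kappa$. In either case $\min\{\bnumber_{R}(\I),\bnumber_{R}(\J)\}\leq\bnumber_{R}(\I\oplus\J)$, and the very same projection argument, applied to a hypothetical gap, shows that if neither summand has a gap then neither does the sum, which covers the remaining $\cc^+$ case.
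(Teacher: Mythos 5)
Your proof is correct and follows essentially the same route as the paper's: lift a gap from one summand onto its copy for the upper bound, and for the lower bound decompose a gap of the sum into its two coordinate parts, observing that separators of the sum correspond exactly to pairs of separators of the parts, so at least one part must remain unseparated (with the same finiteness arguments to keep the countable side infinite). The only differences are cosmetic --- you make $\cA'$ a partition of the whole sum, which the definition of a gap does not require, and you explicitly discard the $\I$-small projections, a minor point the paper's proof leaves implicit.
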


\begin{proof}
	$(\leq)$ 
	If $(\cA,\cB)$ is an $\I$-$(\omega,\bnumber_R(\I))$-gap, then it is not difficult to check that $(\{A\times\{0\}:A\in \cA\}, \{B\times\{0\}: B\in \cB\})$ is an $\I\oplus \J$-$(\omega,\bnumber_R(\I))$-gap. Similarly for $\J$-gaps. 

$(\geq)$
Let $(\cA,\cB)$ be an $\I\oplus\J$-$(\omega,\bnumber_R(\I\oplus\J))$-gap.
For each $A\in \cA$ and $B\in \cB$  let $A^0,A^1, B^0,B^1\subseteq \omega $ be such that $A = (A^0\times\{0\}) \cup (A^1\times\{1\})$
and
$B = (B^0\times\{0\}) \cup (B^1\times\{1\})$. Let 
$\cA^i = \{A^i:A\in \cA\}$ 
and
$\cB^i = \{B^i:B\in \cB\})$ 
for $i=0,1$. If we show that 
$(\cA^0,\cB^0)$ is an $\I$-$(\omega,|\cB^0|)$-gap 
or
$(\cA^1, \cB^1)$ is an $\J$-$(\omega,|\cB^1|)$-gap, 
the proof will be finished (as $|\cB^0|,|\cB^1|\leq \bnumber_R(\I\oplus\J)$).

Since $(A^0\cap B^0)\times \{0\} \cup (A^{1}\cap B^{1})\times\{1\} = A\cap B \in \I\oplus\J$ for each $A\in \cA$ and $B\in \cB$,
the pair $(\cA^0,\cB^0)$ is $\I$-orthogonal
and
the pair $(\cA^1,\cB^1)$ is $\J$-orthogonal.

Suppose that there exists 
$C^0\subseteq\omega$ 
which $\I$-separates 
$(\cA^0,\cB^0)$ 
and there exists 
$C^1\subseteq\omega$ 
which $\J$-separates 
$(\cA^1, \cB^1)$.
Then it is not difficult to see that $C = (C^0\times\{0\})\cup (C^1\times\{1\})$ is a set which $\I\oplus\J$-separates $(\cA,\cB)$ and yields a contradiction. Thus, either $(\cA^0,\cB^0)$ is an $\I$-gap or $(\cA^1,\cB^)1$ is a $\J$-gap. 

Notice that if $(\cA^0,\cB^0)$ is an $\I$-gap then $\cA^0$ is infinite. Indeed, if $\cA^0$ was finite, then the set $(\omega\setminus \bigcup \cA^0)\times\{0\}$ would separate $(\cA^0,\cB^0)$, a contradiction. Similarly we can show that $\cA^1$ is infinite, provided that $(\cA^1,\cB^1)$ is a $\J$-gap.
\end{proof}

\begin{lemma}
\label{lem:b-less-than-bR}
If  $\J$ and $\K$ are  two maximal ideals such that $\bnumber(\J)\leq\bnumber(\K)$, then 
$$\bb_R(\J\oplus (\K\otimes \{\emptyset\}))=\bb(\K)
\quad
\text{and} \quad 
\bb(\J\oplus (\K\otimes \{\emptyset\}))=\bb(\J).$$ 
\end{lemma}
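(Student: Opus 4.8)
The plan is to handle the two equalities independently. For the Rothberger number I would rely entirely on the two preceding lemmas. Since $\J$ is maximal there is no $\J$-gap, so $\bnumber_{R}(\J)=\continuum^{+}$. Lemma~\ref{lem:bR-for-direct-sum} then gives
$$\bnumber_{R}(\J\oplus(\K\otimes\{\emptyset\}))=\min\{\bnumber_{R}(\J),\bnumber_{R}(\K\otimes\{\emptyset\})\},$$
while Lemma~\ref{lem:b-bR-for-I-times-empty} evaluates $\bnumber_{R}(\K\otimes\{\emptyset\})=\bnumber(\K)$. As $\bnumber(\K)\leq\continuum<\continuum^{+}$, the minimum is $\bnumber(\K)$, which is the first claimed equality. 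I note that this half uses only the maximality of $\J$ and not the hypothesis $\bnumber(\J)\leq\bnumber(\K)$.

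For the bounding number I would first establish, for arbitrary ideals, the product identity $\bnumber(\I\oplus\J)=\min\{\bnumber(\I),\bnumber(\J)\}$. Writing the ground set of $\I\oplus\J$ as the disjoint union of the two copies and splitting a function $f$ into its restrictions $f_{0},f_{1}$ to the two summands, one checks directly from the definitions that $f\in\cD_{\I\oplus\J}$ iff $f_{0}\in\cD_{\I}$ and $f_{1}\in\cD_{\J}$, and that $f\geq_{\I\oplus\J}g$ iff $f_{0}\geq_{\I}g_{0}$ and $f_{1}\geq_{\J}g_{1}$. Hence $(\cD_{\I\oplus\J},\geq_{\I\oplus\J})$ is the coordinatewise product of $(\cD_{\I},\geq_{\I})$ and $(\cD_{\J},\geq_{\J})$.

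In any such product a pair is an upper bound of a family $S$ exactly when its two coordinates bound the two projections of $S$; thus $S$ is bounded iff both projections are bounded, and unbounded iff at least one projection is unbounded. This shows that $S$ is bounded whenever $|S|$ lies below both $\bnumber(\I)$ and $\bnumber(\J)$, giving $\bnumber(\I\oplus\J)\geq\min\{\bnumber(\I),\bnumber(\J)\}$; conversely, pairing a minimal unbounded family from the smaller factor with a single fixed element of the other factor (which is nonempty, as it contains any function with finite fibres) produces an unbounded family of size $\min\{\bnumber(\I),\bnumber(\J)\}$, so $\bnumber(\I\oplus\J)\leq\min\{\bnumber(\I),\bnumber(\J)\}$. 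Applying the resulting identity to the summands $\J$ and $\K\otimes\{\emptyset\}$, and invoking $\bnumber(\K\otimes\{\emptyset\})=\bnumber(\K)$ from Lemma~\ref{lem:b-bR-for-I-times-empty} together with $\bnumber(\J)\leq\bnumber(\K)$, yields
$$\bnumber(\J\oplus(\K\otimes\{\emptyset\}))=\min\{\bnumber(\J),\bnumber(\K)\}=\bnumber(\J).$$

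I expect essentially all the work to sit in the second equality, and within it in the product identification rather than the min formula. The delicate points are to verify the coordinatewise decomposition of both membership in $\cD_{(\cdot)}$ and of the relation $\geq_{(\cdot)}$ straight from the definition of $\oplus$, keeping the direction of the order correct, and to make sure the other factor poset is nonempty so that the minimal unbounded family can actually be lifted. Once the identification is in place the min formula is routine, and the hypothesis $\bnumber(\J)\leq\bnumber(\K)$ enters only at the very end, to evaluate the minimum.
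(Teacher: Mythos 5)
Your proposal is correct, and the first half coincides exactly with the paper's argument: the paper also derives $\bnumber_{R}(\J\oplus(\K\otimes\{\emptyset\}))=\min\{\continuum^{+},\bnumber(\K)\}=\bnumber(\K)$ from Lemma~\ref{lem:bR-for-direct-sum}, Lemma~\ref{lem:b-bR-for-I-times-empty}, and the maximality of $\J$; your observation that this half does not need $\bnumber(\J)\leq\bnumber(\K)$ is accurate. The difference is in the second half. The paper simply cites external results (Theorems 5.1 and 5.13 of \cite{MR4472525}) to get $\bnumber(\J\oplus(\K\otimes\{\emptyset\}))=\min\{\bnumber(\J),\bnumber(\K\otimes\{\emptyset\})\}=\min\{\bnumber(\J),\bnumber(\K)\}$, whereas you reprove the direct-sum identity $\bnumber(\I_{0}\oplus\I_{1})=\min\{\bnumber(\I_{0}),\bnumber(\I_{1})\}$ from first principles by identifying $(\cD_{\I_{0}\oplus\I_{1}},\geq_{\I_{0}\oplus\I_{1}})$ with the coordinatewise product of the two factor posets. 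Your verification is sound: membership in $\cD_{(\cdot)}$ and the relation $\geq_{(\cdot)}$ do decompose coordinatewise straight from the definition of $\oplus$, a family in a product poset is bounded iff both projections are bounded, and $\cD_{\K\otimes\{\emptyset\}}$ is nonempty (any injection works, since singletons lie in every ideal), so the lifting of a minimal unbounded family goes through. What your route buys is a self-contained proof independent of the cited reference; what the paper's route buys is brevity and reuse of results it needs elsewhere anyway. Both are valid, and you still invoke Lemma~\ref{lem:b-bR-for-I-times-empty} for $\bnumber(\K\otimes\{\emptyset\})=\bnumber(\K)$, exactly as the paper does.
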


\begin{proof}
Denote $\I=\J\oplus (\K\otimes \{\emptyset\})$. By \cite[Theorems 5.1 and 5.13]{MR4472525} we have 
\[
\bnumber(\I) 
= 
\min\{\bnumber(\J),\bnumber(\K\otimes \{\emptyset\})\} 
= 
\min\{\bnumber(\J),\bnumber(\K)\} 
= 
\bnumber(\J),\]
and by Lemmas~\ref{lem:b-bR-for-I-times-empty} and \ref{lem:bR-for-direct-sum} we have
\[\bnumber_{R}(\I) 
= 
\min\{\bnumber_{R}(\J),\bnumber_{R}(\K\otimes \{\emptyset\})\} 
= 
\min\{\continuum^+,\bnumber(\K)\} 
= 
\bnumber(\K).\]
\end{proof}

If $\I$ is a maximal ideal, then $\bnumber(\I)\leq \continuum < \continuum^+=\bnumber_{R}(\I)$. In Theorem~\ref{thm:b-less-than-bR} we show that $\bnumber(\I)<\bnumber_{R}(\I)\leq \continuum$ is also consistent.

\begin{theorem}
\label{thm:b-less-than-bR}
In the model obtained by adding $\lambda$ Cohen reals to a model of GCH, for every two regular cardinals $\kappa, \kappa'$ such that $\aleph_1\leq \kappa\leq\kappa'< \lambda=\cc$, there is an ideal $\I$ with $\bb(\I)=\kappa$ and $\bb_R(\I)=\kappa'$. In particular, the following statements are consistent:
\begin{enumerate}
    \item there exists an ideal $\I$ such that $\bb=\bnumber(\I)<\bnumber_{R}(\I)\leq \continuum$;
    \item there exists an ideal $\I$ such that $\bb<\bnumber(\I)<\bnumber_{R}(\I)\leq \continuum$.
\end{enumerate}
\end{theorem}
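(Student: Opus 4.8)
My plan is to obtain $\I$ in the form $\I=\J\oplus(\K\otimes\{\emptyset\})$ furnished by Lemma~\ref{lem:b-less-than-bR}, which reduces the whole statement to producing, inside the Cohen model, two maximal ideals with prescribed values of $\bnumber$. Concretely, suppose I have maximal ideals $\J$ and $\K$ with $\bnumber(\J)=\kappa$ and $\bnumber(\K)=\kappa'$. Since $\kappa\leq\kappa'$ we have $\bnumber(\J)\leq\bnumber(\K)$, so Lemma~\ref{lem:b-less-than-bR} applies and gives at once $\bnumber(\I)=\bnumber(\J)=\kappa$ and $\bnumber_{R}(\I)=\bnumber(\K)=\kappa'$. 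Thus no new computation with gaps or with $\geq_\I$ is required beyond the lemma; everything rests on the supply of maximal ideals with arbitrary regular $\bnumber$.

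The core step, and the one I expect to be the real obstacle, is therefore: in the model obtained by adding $\lambda$ Cohen reals to a model of GCH, for every regular $\mu$ with $\aleph_1\leq\mu\leq\cc$ there is a maximal ideal with $\bnumber=\mu$. This is exactly the content of Canjar's construction \cite{Canjar,Canjar2,CanjarPhD} recalled in the introduction, and I would cite it rather than reprove it. The mechanism is worth noting: for a maximal ideal $\I$ with dual ultrafilter $U$, the relation $\geq_\I$ is linear modulo $U$, the finite-to-one functions (which lie in $\cD_\I$) are $\geq_\I$-cofinal, and hence $\bnumber(\I)$ equals the cofinality of $(\omega^\omega,\geq_\I)$ modulo $U$. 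Canjar builds $U$ by a recursion of length $\cc$ that, exploiting the mutual genericity of the Cohen reals, threads through a $\geq_\I$-increasing cofinal chain of the prescribed regular length $\mu$: cofinality of the chain yields $\bnumber(\I)\leq\mu$, while strict monotonicity together with the regularity of $\mu$ forbids any shorter cofinal set and yields $\bnumber(\I)\geq\mu$. Applying this twice, to $\mu=\kappa$ and to $\mu=\kappa'$, produces the ideals $\J$ and $\K$ needed above; note that both are built inside the one fixed extension, so they coexist in the same model.

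It then remains to read off the two displayed consequences, using the standard fact that $\bb=\aleph_1$ in the Cohen model (e.g.\ because $\bb\leq\mathrm{non}(\mathcal{M})=\aleph_1$). For (1) I take $\kappa=\aleph_1=\bb$ and any regular $\kappa'$ with $\aleph_1<\kappa'<\lambda$, so that $\bb=\bnumber(\I)<\bnumber_{R}(\I)\leq\cc$; for (2) I take $\kappa=\aleph_2$ and $\kappa'=\aleph_3$ with $\lambda>\aleph_3$, so that $\bb=\aleph_1<\bnumber(\I)=\aleph_2<\bnumber_{R}(\I)=\aleph_3\leq\cc$. The only delicate points are that the single Cohen model must realize both cofinalities at once (which it does, since Canjar's recursion is performed within the fixed extension and may be run separately for each regular target) and that the ultrapower-cofinality invariant Canjar controls coincides with $\bnumber(\I)$ as defined here; both are routine given the characterization of $\bnumber(\I)$ for maximal ideals indicated above.
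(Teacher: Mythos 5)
Your proposal is correct and follows exactly the paper's own route: invoke Canjar's results to obtain, in the Cohen model, maximal ideals $\J$ and $\K$ with $\bnumber(\J)=\kappa$ and $\bnumber(\K)=\kappa'$, and then apply Lemma~\ref{lem:b-less-than-bR} to $\I=\J\oplus(\K\otimes\{\emptyset\})$. The extra detail you supply (the ultrapower-cofinality mechanism behind Canjar's construction and the verification that $\bb=\aleph_1$ in the Cohen model for the ``in particular'' clauses) is accurate but not needed beyond what the paper cites.
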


\begin{proof}
    By \cite{Canjar2} and \cite{CanjarPhD}, in the model obtained by adding $\lambda$ Cohen reals to a model of GCH, for every regular cardinals $\aleph_1\leq \kappa\leq\kappa'< \lambda$ there are two maximal ideals $\J$ and $\K$ such that $\bnumber(\J)=\kappa$ and $\bnumber(\K)=\kappa'$. Then Lemma \ref{lem:b-less-than-bR} finishes the proof.
\end{proof}


\section{Technical generalization of the Rathberger number}


Below we define the number $\bnumber_R(\I,\J,\K)$ which can be seen as a convenient tool for studying $\bnumber_{R}(\I)$. 
For instance, we will use it to calculate $\bnumber_{R}(\I)$ for ideals which are between $\conv$ and $\ctbl$ (Section~\ref{subsec:b-R-for-CONV-CTBL}) and to study $\bnumber_{R}(\I)$ for P-ideals with the Baire property
(Section~\ref{subsec:b-R-for-P-BP}).

Let $\I$, $\J$ and $\K$ be ideals on $\omega$.
By an \emph{$(\I,\J,\K)$-$(\kappa,\lambda)$-gap} we mean 
 a pair $(\cA,\cB)$ such that $|\cA|=\omega$, $|\cB|=\lambda$, the families $\cA$ and $\cB$ are $\K$-orthogonal and 
 there is no $C\subseteq\omega$ which \emph{$(\J,\I)$-separates} $(\cA,\cB)$ i.e.~$A\cap C\in \J$ and $B\setminus C\in \I$ for each $A\in \cA$ and $B\in \cB$.
The smallest $\kappa$ for which there is an $(\I,\J,\K)$-$(\omega,\kappa)$-gap will be denoted by 
$\bnumber_{R}(\I,\J,\K)$ and we put $\bnumber_{R}(\I,\J,\K)=\cc^+$
in the case there is no such  gaps.

The following  lemma shows that $\bnumber_{R}(\I,\J,\K)$ is piecewise monotone: decreasing with respect to the last coordinate and increasing with respect to the first and second one.

\begin{lemma}\
\label{lem:monotonicity-of-b-R}
	\begin{enumerate}
		\item If $\K\subseteq \K'$, then $\bnumber_{R}(\I,\J,\K)\geq \bnumber_{R}(\I,\J,\K')$.
		\item If $\J\subseteq \J'$, then $\bnumber_{R}(\I,\J,\K)\leq \bnumber_{R}(\I,\J',\K)$.
		\item If $\I\subseteq \I'$, then $\bnumber_{R}(\I,\J,\K)\leq \bnumber_{R}(\I',\J,\K)$.
	\end{enumerate}
\end{lemma}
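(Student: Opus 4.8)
The plan is to prove each of the three monotonicity statements directly from the definition of $\bnumber_R(\I,\J,\K)$, by showing that a witnessing gap for one side of the inequality is automatically a gap for the other side. Recall that an $(\I,\J,\K)$-$(\omega,\kappa)$-gap is a pair $(\cA,\cB)$ with $|\cA|=\omega$, $|\cB|=\kappa$, the families $\K$-orthogonal, and with no $C$ that $(\J,\I)$-separates them. The number $\bnumber_R(\I,\J,\K)$ is the least such $\kappa$. The uniform strategy is: if enlarging one coordinate makes the orthogonality condition \emph{easier} to satisfy and the non-separation condition \emph{harder} to destroy, then every gap on one side remains a gap on the other, so the least width can only move in the asserted direction.

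For item (1), suppose $\K\subseteq\K'$ and let $(\cA,\cB)$ be an $(\I,\J,\K)$-$(\omega,\kappa)$-gap. I would check that it is also an $(\I,\J,\K')$-gap: since $A\cap B\in\K\subseteq\K'$ for all $A\in\cA$, $B\in\cB$, the pair is still $\K'$-orthogonal, while the $(\J,\I)$-separation condition does not mention $\K$ at all, so it is unchanged. Hence any witness for $\bnumber_R(\I,\J,\K)$ witnesses $\bnumber_R(\I,\J,\K')$, giving $\bnumber_R(\I,\J,\K')\leq\bnumber_R(\I,\J,\K)$, which is the claimed inequality. For item (2), suppose $\J\subseteq\J'$ and let $(\cA,\cB)$ be an $(\I,\J',\K)$-$(\omega,\kappa)$-gap; I claim it is an $(\I,\J,\K)$-gap. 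The $\K$-orthogonality is untouched. For non-separation: if some $C$ were to $(\J,\I)$-separate $(\cA,\cB)$, then $A\cap C\in\J\subseteq\J'$ and $B\setminus C\in\I$ for all $A,B$, so the same $C$ would $(\J',\I)$-separate the pair, contradicting that it is an $(\I,\J',\K)$-gap. Thus every $(\I,\J',\K)$-gap is an $(\I,\J,\K)$-gap, so $\bnumber_R(\I,\J,\K)\leq\bnumber_R(\I,\J',\K)$. Item (3) is symmetric: if $\I\subseteq\I'$ and $(\cA,\cB)$ is an $(\I',\J,\K)$-gap, then any $(\J,\I)$-separator $C$ (with $B\setminus C\in\I\subseteq\I'$) would also be a $(\J,\I')$-separator, contradicting the gap property, so every $(\I',\J,\K)$-gap is an $(\I,\J,\K)$-gap and $\bnumber_R(\I,\J,\K)\leq\bnumber_R(\I',\J,\K)$.

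The only genuine subtlety, which I would flag rather than belabor, is the bookkeeping with the convention $\bnumber_R(\I,\J,\K)=\cc^+$ when no gap of the relevant type exists. The inclusion arguments above show that the \emph{set} of cardinals $\kappa$ admitting a gap can only grow (for items 2, 3) or grow (for item 1, after swapping sides), so the infimum can only decrease in the direction claimed; when the larger side has no gap at all its value is $\cc^+$ and the inequality holds vacuously. I expect no real obstacle here: each part is a one-line containment of separating sets or of orthogonal pairs, and the main ``difficulty'' is merely keeping straight which coordinate relaxes which of the two defining conditions (orthogonality versus separation) and hence in which direction the least width is pushed.
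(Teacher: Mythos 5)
Your proof is correct and is exactly the routine argument the paper leaves as ``Straightforward'': each inclusion of ideals makes every gap on one side automatically a gap on the other, and the $\cc^+$ convention is handled properly. Nothing to add.
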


\begin{proof}
    Straightforward.
\end{proof}


\subsection{Rothberger number for ideals which are between \texorpdfstring{$\conv$}{conv} and \texorpdfstring{$\ctbl$}{ctbl}}
\label{subsec:b-R-for-CONV-CTBL}

\begin{lemma}	
	\label{lem:b-R-for-conv-and-ctbl}
	$\bnumber_{R}(\ctbl,\ctbl,\conv) \leq  \omega_1$,
where 
$$\ctbl = \{A\subseteq\Q\cap [0,1]: |\overline{A}|\leq \omega\}$$
and  the closure $\overline{A}$ is taken in the space $[0,1]$.
\end{lemma}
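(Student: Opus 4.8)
The plan is to exhibit an explicit $(\ctbl,\ctbl,\conv)$-$(\omega,\omega_1)$-gap, which by definition of $\bnumber_R(\ctbl,\ctbl,\conv)$ immediately yields the desired upper bound $\omega_1$. Recall that such a gap is a pair $(\cA,\cB)$ with $|\cA|=\omega$, $|\cB|=\omega_1$, the two families $\conv$-orthogonal (i.e.\ $A\cap B\in\conv$ for all $A\in\cA$, $B\in\cB$), and such that no single $C\subseteq\Q\cap[0,1]$ satisfies $A\cap C\in\ctbl$ and $B\setminus C\in\ctbl$ for every $A\in\cA$, $B\in\cB$. The natural source of such a configuration is a family of $\omega_1$-many convergent sequences of rationals whose limits form an uncountable set that cannot be ``trapped'' by a single set with countable closure.

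First I would fix an enumeration-free object: choose an uncountable set $L\subseteq[0,1]$ of limit points, say an uncountable set of irrationals (so that each $x\in L$ is a genuine accumulation point not itself in $\Q\cap[0,1]$), and for each $x\in L$ pick a sequence $B_x\subseteq\Q\cap[0,1]$ of distinct rationals converging to $x$. These $\omega_1$-many sequences will form (a subfamily indexed to size $\omega_1$ of) $\cB$. For $\cA$ I would take a countable family $\{A_n : n<\omega\}$ partitioning $\Q\cap[0,1]$ in a way compatible with the $\conv$-orthogonality requirement; the cleanest choice is to let the $A_n$ be tails/pieces arranged so that each $B_x$ meets each $A_n$ in only finitely many points, forcing $A_n\cap B_x$ to be finite, hence in $\conv$. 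One concrete device is a grid: partition $\Q\cap[0,1]$ by a countable collection of sets each of which is discrete (no accumulation points inside $[0,1]$), which lie in $\conv$, and intersect each convergent sequence $B_x$ finitely. The $\conv$-orthogonality is then automatic since finite sets are in $\conv$.

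The core of the argument is showing that no $C$ can $(\ctbl,\ctbl)$-separate $(\cA,\cB)$. Suppose $C$ satisfies $A_n\cap C\in\ctbl$ for all $n$ and $B_x\setminus C\in\ctbl$ for all $x\in L$. The condition $B_x\setminus C\in\ctbl$ means $\overline{B_x\setminus C}$ is countable; since $B_x$ converges to $x$, if only countably many terms of $B_x$ avoid $C$ then cofinitely many terms of $B_x$ lie in $C$, so $x\in\overline{C}$. Thus $\overline{C}\supseteq L$, and since $|L|=\omega_1$ is uncountable, $\overline{C}$ is uncountable, i.e.\ $C\notin\ctbl$. The plan is then to derive a contradiction with the family of conditions $A_n\cap C\in\ctbl$: because $\{A_n\}$ covers $\Q\cap[0,1]$, we have $C=\bigcup_n(A_n\cap C)$, and I would arrange the partition $\cA$ so that control of the closures $\overline{A_n\cap C}$ forces $\overline{C}$ to be countable, contradicting $\overline{C}\supseteq L$. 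Concretely, if each $A_n$ is chosen \emph{closed-discrete} (finitely many accumulation points in each piece, so $\overline{A_n}$ is countable and $A_n\cap C\in\ctbl$ automatically does \emph{not} help), I instead need the opposite: the $A_n$ must be chosen so that $A_n\cap C\in\ctbl$ is a genuine restriction whose conjunction over $n$ bounds $\overline{C}$.

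The main obstacle, and the step I would spend the most care on, is reconciling these two demands: the $A_n$ must simultaneously meet each $B_x$ only finitely (for $\conv$-orthogonality) yet be rich enough that $\bigcup_n\overline{A_n\cap C}$ controls $\overline{C}$. The resolution is to exploit the asymmetry between $\conv$ and $\ctbl$: a set with finitely many accumulation points is in $\conv\subseteq\ctbl$, and I would choose $\cA$ to be a partition of $\Q\cap[0,1]$ into sets $A_n$ each homeomorphic to $\omega$ (i.e.\ each $A_n$ has exactly one accumulation point, or none), positioned so that the accumulation points of the $A_n$ are \emph{dense} in $[0,1]$ and in particular accumulate at every point of $L$. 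Then, assuming $A_n\cap C\in\ctbl$ for all $n$ while $\overline{C}\supseteq L$, I would locate some $x\in L$ at which $C$ accumulates through one fixed $A_n$, forcing $x\in\overline{A_n\cap C}$ for that $n$; carefully counting and using that $L$ is uncountable while the set of accumulation points of $\overline{A_n\cap C}\in\ctbl$ is countable for each of the countably many $n$, the union $\bigcup_n\overline{A_n\cap C}$ remains countable and cannot contain all of $L$. This is precisely the contradiction, and I expect the delicate point to be verifying that $\overline{C}\subseteq\bigcup_n\overline{A_n\cap C}$ up to a countable error, which follows because $C\subseteq\bigcup_n A_n$ and closure behaves well under the countable union once each piece is controlled.
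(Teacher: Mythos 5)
Your construction breaks down at the very first step: each $B_x$ is a single sequence of rationals converging to $x$, so $\overline{B_x}=B_x\cup\{x\}$ is countable and $B_x\in\conv\subseteq\ctbl$. Consequently $B_x\setminus C\in\ctbl$ holds for \emph{every} $C$ (in particular $C=\emptyset$ already $(\ctbl,\ctbl)$-separates your pair), and the inference on which your whole argument rests --- that $B_x\setminus C\in\ctbl$ forces cofinitely many terms of $B_x$ into $C$ and hence $x\in\overline{C}$ --- is false: $B_x\setminus C$ has countable closure no matter how many terms avoid $C$. For the condition $B\setminus C\in\ctbl$ to carry any content, the members of $\cB$ must have uncountable closure, which a single convergent sequence never does. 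The paper's construction addresses exactly this: each $B_\alpha$ is a union of countably many convergent sequences $x^{n,\alpha}$, one inside each $A_n$, whose limits $y^{n,\alpha}$ lie in the $n$-th basic open set $U_n$ and are pairwise distinct as $\alpha$ varies; then $A_n\cap C\in\ctbl$ excludes only countably many $\alpha$ at each $n$, so some $\alpha<\omega_1$ survives all $n$ simultaneously, and for that $\alpha$ every $y^{n,\alpha}$ is still a limit point of $B_\alpha\setminus C$, giving $\overline{B_\alpha\setminus C}=[0,1]\notin\ctbl$.

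The closing step of your plan is also unsound on its own terms: you want to conclude that $\overline{C}$ is countable from the countability of each $\overline{A_n\cap C}$, ``because closure behaves well under the countable union.'' It does not: $\overline{\bigcup_n X_n}$ can be strictly larger than $\bigcup_n \overline{X_n}$, and one can partition $\Q\cap[0,1]$ into countably many finite (hence $\ctbl$) pieces whose union has closure all of $[0,1]$. So even with a cleverly chosen partition $\cA$, the hypothesis that $A_n\cap C\in\ctbl$ for all $n$ cannot bound $\overline{C}$, and no contradiction with $\overline{C}\supseteq L$ can be extracted along these lines. The workable strategy is the paper's: do not attempt to control $\overline{C}$ at all, but show directly that for a suitable $\alpha$ the set $C$ removes only a relatively small part of each constituent sequence of $B_\alpha$, so that $B_\alpha\setminus C$ itself retains uncountable closure.
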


\begin{proof}
Let $\{U_n:n\in\omega\}$ be a countable base for the topology on $[0,1]$.
Inductively, we pick infinite sets 
$$A_n\subseteq \Q\cap \left(U_n \setminus \bigcup_{i<n}\overline{A_i}\right)$$ 
such that $\overline{A_n}$ is a perfect nowhere dense set, $\overline{A_n}\subseteq U_n$ and 
$$\overline{A_n}\cap \bigcup_{i<n}\overline{A_i}=\emptyset.$$

	For all $n\in \omega$ and $\alpha<\omega_1$, 
	we pick convergent sequences $x^{n,\alpha} : \omega\to A_n$ having  pairwise distinct limits $y^{n,\alpha}$.
	Then, we define 
    $$B_\alpha = \bigcup_{n\in \omega} \{x^{n,\alpha}(i): i\in\omega\}$$ for each $\alpha<\omega_1$.
	
	Let $\cA=\{A_n:n\in \omega\}$ and $\cB=\{B_\alpha:\alpha<\omega_1\}$. Once we show that $(\cA,\cB)$ is an $(\ctbl,\ctbl,\conv)$-$(\omega,\omega_1)$-gap, the proof will be finished.
	
	Obviously, $|\cA|=\omega$ and $|\cB|=\omega_1$.
	Since $A_n\cap B_\alpha = \{x^{n,\alpha}(i):i\in\omega\} \in \conv$ for any $n$ and $\alpha$, the families $\cA$ and $\cB$ are $\conv$-orthogonal.

	Let $C\subseteq \Q\cap [0,1]$ be such that $A_n\cap C\in \ctbl$ for each $n$.
	We need to find $\alpha$ with $B_\alpha\setminus  C\notin \ctbl$. For each $n$, since $A_n\cap C\in \ctbl$, there is $\alpha_n<\omega_1$ such that 
	$y^{n,\alpha}\notin \overline{A_n\cap C}$ for each $\alpha>\alpha_n$.
Take any countable ordinal $\alpha > \sup\{\alpha_n:n<\omega\}$.
Then $y^{n,\alpha}\notin \overline{A_n\cap C}$ for each $n$, so $x^{n,\alpha}(i)\notin C$ for all but finitely many $i\in \omega$.
Consequently, $x^{n,\alpha}(i)\in B_\alpha\setminus C$ for all but finitely many $i\in \omega$, hence $y^{n,\alpha}\in \overline{B_\alpha \setminus C}$. Since $y^{n,\alpha}\in U_n$, $\overline{B_\alpha \setminus C} = [0,1]$. Thus, $B_\alpha \setminus C\notin\ctbl$.
\end{proof}

For a countable ordinal $\alpha$, 
we have the following \emph{$\alpha$-th Cantor-Bendixson ideal} \cite{alcantara-phd-thesis}
$$\CB_{\alpha} = \{A\subseteq \Q\cap [0,1]: (\overline{A})^{(\alpha)}=\emptyset\},$$
where
 the closure $\overline{A}$ 
and the $\alpha$-th Cantor-Bendixson derivative $(\overline{A})^{(\alpha)}$  
are taken in the space $[0,1]$ (see e.g.~\cite{MR1321597}).
It is not difficult to see  that 
$\CB_{0} = \{\emptyset\}$, $\CB_{1}=\fin$, $\CB_{2}= \conv$
and 
$$\ctbl = \bigcup_{\alpha<\omega_1}\CB_{\alpha}.$$

\begin{theorem}
	\label{thm:b-for-conv-and-ctbl}
If $\conv\subseteq\I\subseteq\ctbl$, then $\bnumber(\I)=\bnumber_{R}(\I)=\omega_1$.
In particular, 
	\begin{enumerate}
		\item 
	$\bnumber(\conv) = \bnumber_{R}(\conv) = \omega_1$,
\item 	
	$\bnumber(\ctbl) = \bnumber_{R}(\ctbl) = \omega_1$,
	\item 
	$\bnumber(\CB_{\alpha}) = \bnumber_{R}(\CB_{\alpha}) = \omega_1$ for each $\alpha\geq 2$. 
\end{enumerate}
\end{theorem}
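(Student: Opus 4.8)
The plan is to derive the theorem from four ingredients already available: the key estimate $\bnumber_{R}(\ctbl,\ctbl,\conv)\leq\omega_1$ (Lemma~\ref{lem:b-R-for-conv-and-ctbl}), the monotonicity of the three-parameter Rothberger number (Lemma~\ref{lem:monotonicity-of-b-R}), the inequality $\bnumber(\I)\leq\bnumber_{R}(\I)$ (Theorem~\ref{thm:b-vs-b-inequality}), and the general lower bound $\bnumber(\I)\geq\omega_1$ valid for every ideal \cite[Theorem~4.2]{MR4472525}. Since $\omega_1\leq\bnumber(\I)\leq\bnumber_{R}(\I)$ holds for every $\I$, it suffices to prove the single upper bound $\bnumber_{R}(\I)\leq\omega_1$ for each $\I$ with $\conv\subseteq\I\subseteq\ctbl$; the chain of equalities then closes automatically.

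First I would record the (essentially definitional) identity $\bnumber_{R}(\I)=\bnumber_{R}(\I,\I,\I)$: an $\I$-$(\omega,\kappa)$-gap is precisely an $(\I,\I,\I)$-$(\omega,\kappa)$-gap, because a $(\I,\I)$-separation is exactly an $\I$-separation and $\K$-orthogonality with $\K=\I$ is $\I$-orthogonality. The only mild point is that the three-parameter definition does not insist that $\cA,\cB\subseteq\I^+$; but discarding the members lying in $\I$ changes neither orthogonality nor separability, while a genuine gap forces $\cA$ to be infinite (otherwise $\omega\setminus\bigcup\cA$ separates) and $\cB$ nonempty, so the two minima coincide.

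The heart of the argument is then a single monotonicity computation. Using $\conv\subseteq\I\subseteq\ctbl$ and applying Lemma~\ref{lem:monotonicity-of-b-R} coordinate by coordinate, I would write
\[
\bnumber_{R}(\I)=\bnumber_{R}(\I,\I,\I)\leq\bnumber_{R}(\I,\I,\conv)\leq\bnumber_{R}(\I,\ctbl,\conv)\leq\bnumber_{R}(\ctbl,\ctbl,\conv)\leq\omega_1,
\]
where the first inequality shrinks the third coordinate from $\I$ down to $\conv$ (part~(1), which reverses inclusions), the second enlarges the second coordinate from $\I$ to $\ctbl$ (part~(2)), the third enlarges the first coordinate from $\I$ to $\ctbl$ (part~(3)), and the last step is Lemma~\ref{lem:b-R-for-conv-and-ctbl}. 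Combining this with $\omega_1\leq\bnumber(\I)\leq\bnumber_{R}(\I)$ yields $\bnumber(\I)=\bnumber_{R}(\I)=\omega_1$.

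Finally I would dispatch the three itemized cases by verifying the hypothesis $\conv\subseteq\I\subseteq\ctbl$. For (1) and (2) this is immediate. For (3), given $\alpha\geq 2$ the inclusion $\conv=\CB_{2}\subseteq\CB_{\alpha}$ follows from the monotonicity of the Cantor--Bendixson derivative (if $(\overline{A})^{(2)}=\emptyset$ then $(\overline{A})^{(\alpha)}=\emptyset$), while $\CB_{\alpha}\subseteq\ctbl$ is immediate from $\ctbl=\bigcup_{\beta<\omega_1}\CB_{\beta}$. I do not expect a genuine obstacle here: all the real work is concentrated in the gap construction of Lemma~\ref{lem:b-R-for-conv-and-ctbl}, and the present theorem is a clean bookkeeping consequence of it; the only step demanding a line of care is the identification $\bnumber_{R}(\I)=\bnumber_{R}(\I,\I,\I)$.
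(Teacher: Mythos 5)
Your proposal is correct and follows essentially the same route as the paper: the lower bound $\omega_1\leq\bnumber(\I)\leq\bnumber_{R}(\I)$ from Theorem~\ref{thm:b-vs-b-inequality} together with $\bnumber(\I)\geq\omega_1$, and the upper bound $\bnumber_{R}(\I)=\bnumber_{R}(\I,\I,\I)\leq\bnumber_{R}(\ctbl,\ctbl,\conv)\leq\omega_1$ via Lemmas~\ref{lem:monotonicity-of-b-R} and \ref{lem:b-R-for-conv-and-ctbl}. The only difference is that you spell out the monotonicity chain coordinate by coordinate and justify the identification $\bnumber_{R}(\I)=\bnumber_{R}(\I,\I,\I)$, which the paper leaves implicit; your extra care there is sound and harmless.
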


\begin{proof}
	Using Theorem~\ref{thm:b-vs-b-inequality}
	and the fact that $\bnumber(\I)\geq \omega_1$ for each $\I$ \cite[Theorem~4.2]{MR4472525}, 
	we obtain 
	$\omega_1\leq \bnumber(\I) \leq \bnumber_{R}(\I)$. Now using Lemmas~\ref{lem:monotonicity-of-b-R} and \ref{lem:b-R-for-conv-and-ctbl}, 
we obtain 
$\bnumber_{R}(\I) = \bnumber_{R}(\I,\I,\I) \leq \bnumber_{R}(\ctbl,\ctbl,\conv)\leq \omega_1$.
\end{proof}


\subsection{Rothberger number for P-ideals}
\label{subsec:b-R-for-P-BP}

It is known \cite[Theorem~2.3(7)]{MR3247032} that $\bnumber_{R}(\I)=\bnumber$ for all analytic P-ideals. Below we extend this result to all P-ideals with the Baire property.

\begin{lemma}
\label{bR-I-F-F-for-BP-ideal}
$\bnumber_{R}(\I,\fin,\fin)=\bnumber$
for every ideal  $\I$ with the Baire property.
\end{lemma}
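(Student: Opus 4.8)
The plan is to prove the two inequalities $\bnumber\leq\bnumber_R(\I,\fin,\fin)$ and $\bnumber_R(\I,\fin,\fin)\leq\bnumber$ separately. The lower bound is immediate from the machinery already set up: since $\I$ is an ideal we have $\fin\subseteq\I$, so Lemma~\ref{lem:monotonicity-of-b-R}(3) gives $\bnumber_R(\fin,\fin,\fin)\leq\bnumber_R(\I,\fin,\fin)$, and $\bnumber_R(\fin,\fin,\fin)=\bnumber_R(\fin)=\bnumber$. (Intuitively, enlarging the first coordinate only makes $(\fin,\I)$-separation easier, hence gaps harder to build and the invariant larger.) All the work therefore goes into the reverse inequality, which I would establish by explicitly exhibiting an $(\I,\fin,\fin)$-$(\omega,\bnumber)$-gap.

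For the construction I would invoke the Talagrand--Jalali-Naini characterization of meager ideals: since $\I$ has the Baire property it is meager, so there is a partition of $\omega$ into finite intervals $\{J_i:i\in\omega\}$ such that for every $A\in\I$ the set $\{i:J_i\subseteq A\}$ is finite; equivalently, any set containing infinitely many of the $J_i$ is $\I$-positive. Relabelling the intervals by pairs through a bijection $\omega\cong\omega\times\omega$, I obtain a partition $\{J_{m,k}:(m,k)\in\omega\times\omega\}$ with the same property. I then set $A_m=\bigcup_{k\in\omega}J_{m,k}$ (the $m$-th ``column'') and, for $f$ ranging over a $\leq^*$-unbounded family $F\subseteq\omega^\omega$ of size $\bnumber$, set $B_f=\bigcup_{m\in\omega}J_{m,f(m)}$ (a ``graph'' picking one interval from each column). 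Put $\cA=\{A_m:m\in\omega\}$ and $\cB=\{B_f:f\in F\}$, so that $|\cA|=\omega$ and $|\cB|=\bnumber$.

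It remains to verify that $(\cA,\cB)$ is an $(\I,\fin,\fin)$-gap. Orthogonality is routine, since $A_m\cap B_f=J_{m,f(m)}$ is a single finite interval and hence lies in $\fin$. The crux is the non-separation. Given any $C$ with $A_m\cap C$ finite for every $m$, in each column all but finitely many intervals are disjoint from $C$, so I define $g(m)$ to be a bound beyond which $J_{m,k}\cap C=\emptyset$. Unboundedness of $F$ yields $f\in F$ with $f(m)\geq g(m)$ for infinitely many $m$, and for each such $m$ the whole interval $J_{m,f(m)}$ is contained in $B_f\setminus C$. Thus $B_f\setminus C$ contains infinitely many of the $J_{m,k}$, whence $B_f\setminus C\notin\I$ by the defining property of the partition, and $C$ fails to $(\fin,\I)$-separate $(\cA,\cB)$. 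This produces the required gap and finishes the upper bound. The one genuinely nontrivial point — and the main obstacle — is exactly this last step: the ordinary Rothberger construction only forces $B_f\setminus C$ to be \emph{infinite}, whereas here I need it $\I$-positive, and it is precisely the meagerness of $\I$, channelled through the interval partition, that lets me force $B_f\setminus C$ to swallow whole intervals and thereby escape $\I$.
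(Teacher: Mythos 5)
Your proof is correct and follows essentially the same route as the paper's: the lower bound via monotonicity, and the upper bound by transplanting the classical Rothberger $(\omega,\bnumber)$-gap onto a Talagrand interval partition witnessing meagerness of $\I$, so that the unbounded function forces $B_f\setminus C$ to contain infinitely many whole intervals and hence be $\I$-positive. The only (cosmetic) difference is the indexing: you pick a single block $J_{m,f(m)}$ from each column via a bijection $\omega\cong\omega\times\omega$, while the paper takes the union of the blocks indexed by an initial segment $D_n\cap f_\alpha(n)$; both yield the same $\fin$-orthogonality and the same non-separation argument.
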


\begin{proof}
	By Lemma~\ref{lem:monotonicity-of-b-R}, we have $\bnumber =\bnumber_{R}(\fin,\fin,\fin)\leq \bnumber_{R}(\I,\fin,\fin)$. We will show that $\bnumber_{R}(\I,\fin,\fin)\leq\bnumber$ for ideals $\I$ with the Baire property.
    
	Since $\I$ has the Baire property,
	there exists a sequence $m_1<m_2<\dots$ such that 
	if there are infinitely many $k$ with $[m_k,m_{k+1})\subseteq A$, then $A\notin \I$
\cite[Th\'{e}or\`{e}me 21]{MR579439} (see also \cite[Theorem 4.1.2]{MR1350295}).
	
	Take any partition $\{D_n:n\in \omega\}$ of $\omega$ into infinite sets and any unbounded family $\{f_\alpha:\alpha<\bnumber\}$ in $(\omega,\leq^*)$.
	
	Define 
	$$A_n = \bigcup_{k\in D_n}[m_k,m_{k+1}) \ \ \ \text{and} \  \ \  
	B_\alpha = \bigcup_{n\in \omega} \bigcup_{k\in D_n\cap f_\alpha(n)} [m_k,m_{k+1})$$
	for each $n\in \omega$ and $\alpha<\bnumber$.
	
	Let $\cA=\{A_n:n\in \omega\}$ and $\cB=\{B_\alpha:\alpha<\bnumber\}$.
	Once we show that $(\cA,\cB)$ is an $(\I,\fin,\fin)$-$(\omega,\kappa)$-gap  with some $\kappa\leq \bnumber$, the proof will be finished.
	
	Using the fact that $D_n$ are pairwise disjoint, we see that $|\cA|=\omega$.
	Obviously, $|\cB|\leq \bnumber$. Moreover, for any $n$ and $\alpha$, 
    $$A_n\cap B_\alpha = \bigcup_{k\in D_n\cap f_\alpha(n)} [m_k,m_{k+1}) \in \fin.$$ 
    Thus $\cA$ and $\cB$ are $\fin$-orthogonal. 
	
	Finally, take any $C\subseteq\omega$ such that $A_n\cap C\in \fin$ for each $n$. We need to find $\alpha$ such that $B_\alpha\setminus C\notin\I$. Let $f:\omega\to\omega$ be defined by
	$$f(n)= \min\{k\in D_n: (A_n\cap C) \cap [m_k,m_{k+1})=\emptyset\}.$$
	Let $\alpha<\bnumber$ be such that the set $E = \{n\in \omega: f(n)<f_\alpha(n)\}$ is infinite. Then $[m_{f(n)}, m_{f(n)+1}) \subseteq B_\alpha \setminus (A_n\cap C)$ for each $n\in E$.
	Since $[m_{f(n)}, m_{f(n)+1}) \subseteq A_n$, 
	$[m_{f(n)}, m_{f(n)+1}) \subseteq B_\alpha \setminus C$ for each $n\in E$.
	Consequently, $B_\alpha \setminus C \notin \I$.
\end{proof}

\begin{lemma}
\label{lem:inequality_for_P-ideals}
    $\bb_R(\I)\leq\bb(\I,\fin,\fin)$ 
    for each P-ideal $\I$.
\end{lemma}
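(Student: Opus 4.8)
The plan is to show that \emph{every} $(\I,\fin,\fin)$-$(\omega,\kappa)$-gap is, after discarding the inessential members, a genuine $\I$-gap of type $(\omega,\lambda)$ with $\lambda\le\kappa$; since $\bnumber_{R}(\I)$ is the least type of an $\I$-gap, this yields $\bnumber_{R}(\I)\le\bnumber_{R}(\I,\fin,\fin)$. We may assume $\kappa=\bnumber_{R}(\I,\fin,\fin)\le\cc$ (otherwise the inequality is trivial) and fix a witnessing $(\I,\fin,\fin)$-$(\omega,\kappa)$-gap $(\cA,\cB)$ with $\cA=\{A_n:n\in\omega\}$. The two things to verify are that $(\cA,\cB)$ is $\I$-orthogonal and that it admits no $\I$-separator.

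$\I$-orthogonality is immediate from $\fin$-orthogonality, since $\fin\subseteq\I$. The crux is ruling out an $\I$-separator, and this is the only place where the $P$-ideal hypothesis is needed. Suppose toward a contradiction that $C\subseteq\omega$ $\I$-separates $(\cA,\cB)$, i.e.\ $A_n\cap C\in\I$ for all $n$ and $B\setminus C\in\I$ for all $B\in\cB$. Because $\cA$ is countable, $\langle A_n\cap C:n\in\omega\rangle$ is a sequence of members of $\I$, so the $P$-ideal property provides $D\in\I$ with $(A_n\cap C)\setminus D$ finite for every $n$. Then $C':=C\setminus D$ \emph{$(\fin,\I)$-separates} $(\cA,\cB)$: indeed $A_n\cap C'=(A_n\cap C)\setminus D\in\fin$ for each $n$, whereas $B\setminus C'=(B\setminus C)\cup(B\cap D)\in\I$ for each $B\in\cB$, being a union of two members of $\I$. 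This contradicts $(\cA,\cB)$ being an $(\I,\fin,\fin)$-gap, so no $\I$-separator exists. The naive idea of reusing a given separator fails precisely because $A_n\cap C$ is only $\I$-small rather than finite; the whole point of the $P$-ideal property is to collect these $\I$-small overlaps into a single $D\in\I$ and shrink $C$ to $C\setminus D$, upgrading $\I$-smallness on the $\cA$-side to finiteness while keeping the $\cB$-side $\I$-small. I expect this separator downgrade to be the main (indeed the only nontrivial) step.

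Finally, to match the definition of an $\I$-gap I would pass to $\cA'=\cA\cap\I^+$ and $\cB'=\cB\cap\I^+$. This preserves $\I$-orthogonality and the absence of an $\I$-separator, because a member of $\cA$ (resp.\ $\cB$) lying in $\I$ imposes no constraint on a candidate separator, so any $\I$-separator of $(\cA',\cB')$ would already $\I$-separate $(\cA,\cB)$. Using once more that $(\cA,\cB)$ has no $\I$-separator, both $\cA'$ and $\cB'$ must be infinite: if $\cB'$ were finite then $\bigcup\cB'$ would $\I$-separate, and if $\cA'$ were finite then $\omega\setminus\bigcup\cA'$ would $\I$-separate, each a contradiction (here one uses $\fin$-orthogonality to see the relevant traces are finite). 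Hence $(\cA',\cB')$ is a bona fide $\I$-gap with $|\cA'|=\omega$ and $\omega\le|\cB'|\le\kappa$, whence $\bnumber_{R}(\I)\le|\cB'|\le\kappa=\bnumber_{R}(\I,\fin,\fin)$. This last paragraph is routine bookkeeping.
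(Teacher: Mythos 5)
Your proof is correct and uses the same key idea as the paper's: the P-ideal property collects the $\I$-small traces $A_n\cap C$ into a single $D\in\I$, and passing to $C'=C\setminus D$ upgrades the would-be separator to a $(\fin,\I)$-separator, contradicting the $(\I,\fin,\fin)$-gap hypothesis. The only difference is organizational --- the paper routes the argument through the intermediate inequality $\bnumber_{R}(\I,\I,\fin)\leq\bnumber_{R}(\I,\fin,\fin)$ and the monotonicity lemma, whereas you build the $\I$-gap directly and (somewhat more carefully than the paper) make explicit the trimming of $\cA$ and $\cB$ to their $\I^+$ members.
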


\begin{proof}
    By Lemma \ref{lem:monotonicity-of-b-R}, $\bb_R(\I)\leq\bb_R(\I,\I,\fin)$. To finish the proof, we will show that $\bb_R(\I,\I,\fin)\leq\bb_R(\I,\fin,\fin)$ for every P-ideal $\I$. 

    Let $(\cA,\cB)$ be an $(\I,\fin,\fin)$-$(\omega,\bnumber_{R}(\I,\fin,\fin))$-gap. Then $\cA$ and $\cB$ are $\fin$-orthogonal. Take any $C\subseteq\omega$ such that $A\cap C\in \I$ for all $A\in \cA$. Since $\I$ is a P-ideal and $\cA$ is countable, there exists $D\in \I$ such that $(A\cap C)\setminus D\in \fin$ for each $A\in \cA$. Let $C' = C\setminus D$. Since $A\cap C' \in\fin$ for each $A\in \cA$, there exists $B\in \cB$ with $B\setminus C'\notin\I$. Then $B\setminus C=(B\setminus C')\setminus D\notin\I$ (as $D\in\I$), so $C$ does not $(\I,\I)$-separate $(\cA,\cB)$.
\end{proof}

\begin{theorem}
\label{thm:all-ideal-b-equals-b-for-BP-ideals}
$\bb_R(\I)=\bb(\I)=\bb$
for each P-ideal $\I$ with the Baire property.
\end{theorem}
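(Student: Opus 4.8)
The plan is to sandwich $\bnumber_R(\I)$ between $\bnumber$ from below and from above, so that the theorem falls out as a short bookkeeping of inequalities once the preparatory lemmas are in hand.

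For the lower bound I would invoke the fact that $\bnumber\leq\bnumber(\I)$ for every P-ideal $\I$ (recorded in the introduction, from \cite{MR4472525}) together with Theorem~\ref{thm:b-vs-b-inequality}, which gives $\bnumber(\I)\leq\bnumber_R(\I)$ for every ideal. Chaining these yields $\bnumber\leq\bnumber(\I)\leq\bnumber_R(\I)$, pinning down both middle quantities from below.

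For the upper bound I would combine the two lemmas tailored for this purpose. Since $\I$ is a P-ideal, Lemma~\ref{lem:inequality_for_P-ideals} gives $\bnumber_R(\I)\leq\bnumber_R(\I,\fin,\fin)$; and since $\I$ has the Baire property, Lemma~\ref{bR-I-F-F-for-BP-ideal} evaluates $\bnumber_R(\I,\fin,\fin)=\bnumber$. Together these give $\bnumber_R(\I)\leq\bnumber$.

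Putting the two bounds together produces $\bnumber\leq\bnumber(\I)\leq\bnumber_R(\I)\leq\bnumber_R(\I,\fin,\fin)=\bnumber$, forcing every inequality to be an equality and hence $\bnumber_R(\I)=\bnumber(\I)=\bnumber$, as claimed. I expect no genuine obstacle at the level of the theorem itself: all of the real work has already been localized into the two preparatory lemmas, namely Lemma~\ref{bR-I-F-F-for-BP-ideal}, where the Baire property is converted into usable block structure and the bounding family of functions is transported into a gap, and Lemma~\ref{lem:inequality_for_P-ideals}, where the P-ideal hypothesis is used to absorb a candidate separator into the ideal and thereby pass from $(\I,\fin,\fin)$-separation to $(\I,\I)$-separation. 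Given those, the final step is purely formal.
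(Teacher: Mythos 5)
Your proposal is correct and is essentially identical to the paper's own proof: the upper bound $\bnumber_R(\I)\leq\bnumber$ via Lemmas~\ref{lem:inequality_for_P-ideals} and \ref{bR-I-F-F-for-BP-ideal}, and the lower bound $\bnumber\leq\bnumber(\I)\leq\bnumber_R(\I)$ via Theorem~\ref{thm:b-vs-b-inequality} and the cited fact that $\bnumber(\I)\geq\bnumber$ for P-ideals. No gaps.
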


\begin{proof}
The inequality $\bb_R(\I)\leq\bb$ follows from Lemmas~\ref{bR-I-F-F-for-BP-ideal} and \ref{lem:inequality_for_P-ideals}. The inequalities $\bb_R(\I)\geq\bb(\I)\geq\bb$ follow from Theorem \ref{thm:b-vs-b-inequality} and the fact that $\bb(\I)\geq\bb$ for P-ideals \cite[Theorem 4.5]{MR4472525}.
\end{proof}


\section{Rothberger number for the ideal \texorpdfstring{$\mz$}{null}}
\label{sec:b-R-for-NULL}

Kankaanp\"{a}\"{a} \cite{MR3032424} proved that
$\bnumber_{R}(\nwd)=\add(\cM)$, 
where $\add(\cM)$ is the least cardinal $\kappa$ such that there exists a family of $\kappa$ meager subsets of the real line which union is not meager.
In particular, there is a $\nwd$-$(\omega, \add(\cM))$-gap. He also asked \cite[Question 4]{MR3032424} whether 
there is a $\mz$-$(\omega, \add(\cN))$-gap, 
where $\add(\cN)$ is the least cardinal $\kappa$ such that there exists a family of $\kappa$ Lebesgue null sets which union is not Lebesgue null,
and
$$\mz  = \{A\subseteq\Q\cap [0,1]: \text{$\overline{A}$ has Lebesgue measure zero}\}$$
where  the closure $\overline{A}$ is taken in the space $[0,1]$.
The following theorem shows that the \emph{pseudointersection number} $\pnumber$ (see e.g.~\cite[Definition~6.2]{MR2768685}) is a lower bound on the 
Rothberger number for the ideal $\mz$ and consequently answers the above question.

\begin{theorem}
\label{thm:b-R-for-NULL}
$\bnumber_R(\mz)\geq \pnumber$.
In particular, it is consistent that there is no $\mz$-$(\omega, \add(\cN))$-gap.
\end{theorem}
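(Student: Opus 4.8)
The plan is to prove the equivalent statement that $\kappa<\pnumber$ implies there is no $\mz$-$(\omega,\kappa)$-gap; the inequality $\bnumber_R(\mz)\ge\pnumber$ is then immediate. By \cite[Lemma~2.2]{MR3247032} I may assume the gap $(\cA,\cB)$ has $\cA=\{A_n:n<\omega\}$ equal to a partition of $\Q\cap[0,1]$, with $|\cB|=\kappa$. Writing $M_{n,B}=\overline{A_n\cap B}$, the $\mz$-orthogonality of $(\cA,\cB)$ says exactly that every $M_{n,B}$ is a closed Lebesgue-null subset of $[0,1]$. I reduce the whole problem to the following covering statement: there exist full-measure open sets $O_n\subseteq[0,1]$ ($n<\omega$) such that $\overline{\bigcup_n(O_n\cap M_{n,B})}$ is null for every $B\in\cB$. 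Granting this, set $K_n=[0,1]\setminus O_n$ (a closed null set) and $C=\bigcup_n(A_n\cap K_n)$; since the $A_n$ are pairwise disjoint one checks $A_k\cap C=A_k\cap K_k\subseteq K_k$ and $B\setminus C\subseteq\bigcup_n(M_{n,B}\cap O_n)$, so both closures are null and $C$ $\mz$-separates $(\cA,\cB)$.

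To build the $O_n$ I use Bell's theorem in the form $\pnumber=\mm_{\sigma\text{-centered}}$, so that for $\kappa<\pnumber$ every $\sigma$-centered poset meets any prescribed family of $\kappa$ dense sets. The forcing $\mathbb{P}$ approximates the $O_n$ by finite unions of rational open intervals: a condition is a tuple consisting of a level $\ell<\omega$, basic open sets $o_n$ for $n<\ell$ with $\mu([0,1]\setminus o_n)$ small (the current approximation $O_n\supseteq o_n$), a finite set $H\subseteq\cB$ of \emph{active} parameters, and for each $B\in H$ a basic open \emph{reservoir} $r_B$ of small measure. A stronger condition may enlarge the $o_n$, raise $\ell$, enlarge $H$ and shrink the reservoirs, subject to the key commitment that every \emph{newly built} layer respects the active reservoirs, i.e.\ $o_n\cap M_{n,B}\subseteq r_B$ for all $B\in H$ and all $n$ with $\ell\le n$ below the new level. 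The point of working with finite unions of intervals is that $\mu(\overline{r_B})=\mu(r_B)$, so a reservoir of small measure also has closure of small measure.

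Two verifications drive the argument. First, $\mathbb{P}$ is $\sigma$-centered: conditions sharing the same finite stem $(\ell,\langle o_n\rangle_{n<\ell})$ have a common extension obtained by unioning the active sets and intersecting the reservoirs (all layers $\ge\ell$ are unbuilt, so no reservoir conflict can arise), and there are only countably many stems. Second, the relevant sets are dense. For full measure I use $D_{n,j}=\{p:n<\ell_p,\ \mu([0,1]\setminus o_n^p)<2^{-j}\}$, and for the null-closure requirement I use $E_{B,j}=\{p:B\in H_p,\ \mu(r_B^p)<2^{-j}\}$; altogether these are $\kappa$ dense sets. Density of both rests on the observation that, when building a new layer $n$, the set $\bigcap_{B\in H}\big(([0,1]\setminus M_{n,B})\cup r_B\big)$ is a full-measure open set (a finite intersection of full-measure open sets), so one can always choose $o_n$ inside it with complement of measure below any prescribed bound while honouring all active reservoirs.

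Finally, a filter $\mathcal{G}$ meeting every $D_{n,j}$ and $E_{B,j}$ yields $O_n=\bigcup\{o_n^p:p\in\mathcal{G}\}$; the $D_{n,j}$ force $\mu(O_n)=1$, while for each $B$ a condition in $E_{B,j}$ confines the tail $\bigcup_{n\ge\ell}(O_n\cap M_{n,B})$ to a reservoir of closure-measure $<2^{-j}$, the finitely many remaining layers contributing a set with null closure; letting $j\to\infty$ gives $\overline{\bigcup_n(O_n\cap M_{n,B})}$ null, as required. This proves $\bnumber_R(\mz)\ge\pnumber$, and the second assertion follows because it is consistent that $\add(\cN)<\pnumber$ (there is no ZFC inequality forcing $\pnumber\le\add(\cN)$), whence consistently $\bnumber_R(\mz)>\add(\cN)$ and there is no $\mz$-$(\omega,\add(\cN))$-gap. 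I expect the main obstacle to be the control of \emph{closures} rather than measures: a countable union of null sets is automatically null, so the entire difficulty is to keep each $\bigcup_n(O_n\cap M_{n,B})$ nowhere dense, which is precisely what the closed-reservoir bookkeeping and the reservoir-respecting extension step are designed to achieve.
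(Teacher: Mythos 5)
Your argument takes a genuinely different route from the paper's. The paper disposes of the inequality in one line: it quotes $\pnumber\leq\bnumber(\mz)$ from the cited literature and combines it with Theorem~\ref{thm:b-vs-b-inequality}; the consistency statement then follows, exactly as in your last sentence, from the consistency of $\add(\cN)<\pnumber$. You instead give a direct, essentially self-contained construction of a separating set for any candidate gap of size $<\pnumber$, via Bell's theorem for a $\sigma$-centered poset. Your reduction is correct: with $\cA=\{A_n\}$ a partition, the set $C=\bigcup_n(A_n\cap K_n)$ does satisfy $A_k\cap C\subseteq K_k$ and $B\setminus C\subseteq\bigcup_n(O_n\cap M_{n,B})$, and your observation that only the \emph{closure} of the latter union needs controlling (its measure being automatically zero) is exactly the right focus. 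The stem analysis for $\sigma$-centeredness and the density of $D_{n,j}$ and $E_{B,j}$ are fine, modulo the issue below.

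There is, however, one genuine gap in the reservoir bookkeeping. Your commitment constrains only \emph{newly built} layers ($\ell_{\mathrm{old}}\leq n<\ell_{\mathrm{new}}$), while you explicitly allow a stronger condition to \emph{enlarge} an already-built $o_n$ (as you must, to meet $D_{n,j}$ for every $j$). The final verification then fails: for $p\in\mathcal{G}\cap E_{B,j}$ and $n\geq\ell_p$, the layer $o_n$ is first built at some $q\leq p$ with its trace on $M_{n,B}$ inside $r_B^q\subseteq r_B^p$, but subsequent enlargements of $o_n$ are unconstrained and can add points of $M_{n,B}$ outside $r_B^p$; hence $O_n\cap M_{n,B}\subseteq r_B^p$ is not guaranteed and the tail is not confined to the reservoir. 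The fix is local: require of every extension $q\leq p$ that $(o_n^q\setminus o_n^p)\cap M_{n,B}\subseteq r_B^p$ for \emph{all} $n<\ell_q$ and all $B\in H_p$. This relation is transitive (reservoirs shrink and $H$ grows), is vacuous for the stem amalgamation, and leaves the density arguments untouched, since any $o_n$ can still be enlarged inside the full-measure open set $\bigcap_{B\in H}\bigl(([0,1]\setminus M_{n,B})\cup r_B\bigr)$; with it, for $n<\ell_p$ the set $O_n\cap M_{n,B}$ lies in $(o_n^p\cap M_{n,B})\cup r_B^p$ and for $n\geq\ell_p$ in $r_B^p$, so the closure of the whole union has measure at most $2^{-j}$ plus a null set, and your conclusion stands. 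One last cosmetic point: the closing sentence should say ``null closure'' rather than ``nowhere dense'' --- for $\mz$ it is the measure of the closure, which is precisely what your reservoirs control, that matters.
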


\begin{proof}
    It is known \cite[Corollary 3.7(b)]{more-AK} that $\pnumber\leq \bnumber(\mz)$, so Theorem~\ref{thm:b-vs-b-inequality} finishes the proof of the inequality.
It is consistent that $\add(\cN)<\pp$ \cite[Section 11.1]{MR2768685}, so the proof of ``in particular'' part is finished.
\end{proof}


\bibliographystyle{amsplain}
\bibliography{Two-b-or-not-two-b}

\end{document}